\newcommand{\supp}{\operatorname{supp}}
\newcommand{\dist}{\operatorname{dist}}
\newcommand{\N}{{\mathbb{N}}}
\newcommand{\R}{{\mathbb{R}}}
\newcommand{\C}{{\mathbb{C}}}
\DeclareFontFamily{U}{mathx}{\hyphenchar\font45}
\DeclareFontShape{U}{mathx}{m}{n}{
      <5> <6> <7> <8> <9> <10>
      <10.95> <12> <14.4> <17.28> <20.74> <24.88>
      mathx10
      }{}
\DeclareSymbolFont{mathx}{U}{mathx}{m}{n}
\DeclareMathAccent{\widecheck}{0}{mathx}{"71}
\renewcommand\i{\mathrm{i}}
\newcommand{\p}{{\mathrm p}}
\renewcommand{\c}{{\mathrm c}}
\newcommand{\e}{{\mathrm e}}
\newcommand{\ess}{{\mathrm {ess}}}
\renewcommand{\d}{{\mathrm d}}
\newcommand{\pupo}{{\mathrm {pp}}}
\renewcommand{\Re}{\operatorname{Re}}
\DeclarePairedDelimiter\inp\langle\rangle
\newcommand\parb[2][]{#1 \big ( #2#1\big )}
\newcommand\parbb[2][]{#1 \Big ( #2#1\Big )}
 \newcommand{\pp}{{\mathrm {pp}}}
\newcommand{\mand}{\text{ \,and\, }}
\newcommand{\cas}{{\textrm {the Cauchy--Schwarz inequality }}}
\DeclarePairedDelimiter\ket{\lvert}{\rangle}
\DeclarePairedDelimiter\bra{\langle}{\rvert}
\DeclareMathOperator*{\slim}{s-lim}
\DeclarePairedDelimiter\abs\lvert\rvert
\DeclarePairedDelimiter\norm\lVert\rVert
\DeclarePairedDelimiter\set{\{}{\}}
\newcommand{\brf}{{\breve f}}
\newcommand{\brp}{{\breve \psi}}
\newcommand{\bD}{{\mathbf D}}
\newcommand{\bZ}{{\mathbf Z}}
\newcommand{\bX}{{\mathbf X}}
\newcommand{\vA}{{\mathcal A}}
\newcommand{\vG}{{\mathcal G}}
\newcommand{\vH}{{\mathcal H}}
\newcommand{\vL}{{\mathcal L}}
\newcommand{\vO}{{\mathcal O}}
\newcommand{\vT}{{\mathcal T}}
\theoremstyle{plain}
\newtheorem{thm}{Theorem}[section]
\newtheorem{lemma}[thm]{Lemma} \newtheorem{corollary}[thm]{Corollary}
\newtheorem{cond}[thm]{Condition}
\theoremstyle{definition}
\newtheorem*{remarks*}{Remarks}
\newtheorem*{remark*}{Remark}
\numberwithin{equation}{section}
\title {Asymptotic  completeness for short-range  $N$-body
    systems revisited}
\author{E. Skibsted} \address[E. Skibsted]{Institut for Matematik\\
Aarhus Universitet\\ Ny Munkegade 8000 Aarhus C, Denmark}
\email{skibsted@math.au.dk}
\begin{document}

\begin{abstract} 
We review Yafaev's approach to asymptotic  completeness for 
    systems of particles mutually interacting with  short-range
    potentials \cite{Ya1}. The theory is  based  on computation
    of commutators  with time-independent (mostly  bounded)  observables yielding a
    sufficient supply of Kato smoothness bounds. 
  \end{abstract}

\allowdisplaybreaks

\maketitle

\medskip
\noindent
Keywords: $N$-body Schr\"odinger operators; short-range   scattering
theory.

\medskip
\noindent
Mathematics Subject Classification 2010: 81Q10,  35P05.
\tableofcontents

\section{Introduction}\label{sec:Introduction}

We consider $n$-dimensional  particles $ j=1, \dots,  N$ interacting by short-range
pair-potentials
\begin{equation*}
	V_{ij} (x_i -x_j)=\vO(\abs{x_i -x_j}^{-\delta}), \quad \delta>1.
\end{equation*}
 The total 
Hamiltonian  reads 
\begin{equation*}
H=H^{a_{\max}}=-\sum_{j = 1}^N \frac{1}{2m_j}\Delta_{x_j} + \sum_{1 \le i<j \le N} V_{ij} (x_i -x_j),
\end{equation*} acting 
on $\vH=L^2(\bX)$, where
\begin{equation*}
 \bX=\set[\Big]{ x=(x_1,\dots ,x_N)\in \R^n\times \dots \times  \R^n\mid\sum_{j=1}^{N} m_j x_j =
0}. 
\end{equation*}
 For the cluster decompositions  $a\neq a_{\max}=\set{1, \dots,  N}$
there are  sub-Hamiltonians $H^a$  defined similarly
acting on   $\mathcal H^a=L^2(\bX^a)$ (more precisely given for each $a$ as a tensor sum of Hamiltonians for
the individual clusters of $a$).

Recall that 
for any  \emph{channel}
$\alpha =(a,\lambda^\alpha, u^\alpha)$, i.e.   $a$ be  a cluster
decomposition, $a\neq a_{\max}$, $\lambda^\alpha\in \R$ and  
$(H^a-\lambda^\alpha)u^\alpha=0$ for a normalized $u^\alpha\in
\mathcal H^a$,  the corresponding \emph{channel wave operators}
\begin{subequations}
 \begin{equation}\label{eq:wave_opq}
  W_\alpha^{\pm}=\slim_{t\to \pm\infty}\e^{\i tH}J_\alpha\e^{-\i
   t( -\Delta_{x_a}+\lambda^\alpha )},\quad J_\alpha \varphi
=u^\alpha\otimes
\varphi.
 \end{equation} The existence of the channel wave operators as well as
 their  completeness, i.e. the property  that their ranges span the
 absolutely continuous subspace  of $H$,
 \begin{equation}\label{eq:ACo}
   \Sigma_\alpha^\oplus\,\, R( W_\alpha^{\pm})=\vH_{\rm ac}(H),
 \end{equation} 
\end{subequations}
 is a famous problem in mathematical physics and since decades ago  definitively solved,  we
 refer to 
 \cite{SS, Gra, Ya1, Ta,  DG, Gri, HS, HS2, Is}  although this  list is not complete
 (and for the long-range case to  \cite{De, Zi, DG}). 

In this paper we 
 review Yafaev's approach to asymptotic  completeness \cite{Ya1}
using his vector field constructions (which was inspired by \cite{Gra}) and a 
Mourre estimate (based on  the Graf vector field). A notable virtue of
this approach is that it relies  on commutators  of
\emph{time-independent} observables only, which has applications for
stationary scattering theory \cite{Ya2}, \cite{Ya3} and  \cite{Sk2} (and
for long-range potentials as well, see  \cite{Sk1}). Several of the
other known proofs of asymptotic  completeness  are using time-dependent observables excluding
applications in  stationary scattering theory. 
 In this paper we will
not study the more refined  stationary scattering theory, however
one of the goals is to pursue an approach as close as possible to
the recently improved    stationary scattering theory  \cite{Sk2}. This is  partly
motivated by  aesthetics, attempting in this way to `align' the stationary and 
time-dependent methods. Another  goal is potentially to lay the ground for
 encompassing 
completeness  for $N$-body Schr\"odinger operators with certain
unbounded obstacles,   going beyond the bounded case treated by  Griesemer  \cite{Gri},  as
well as for 
some 
progress on the scattering   problem for $N$-body Schr\"odinger
operators with time-periodic short-range pair-potentials. Rather
unsatisfying very little
is known about the  latter
problem.

Our use of the Mourre
estimate deviates somewhat from the implementation in \cite{Ya1} in
that we use a new 
 `phase space partition of unity'. This amounts  to `preparing' an
initial localization of a given scattering state  in the same spirit as
appearing in 
many  papers, see 
for example 
\cite{SS}. While this may appear as an unnecessary complication compared
to   Yafaev's very efficient and elegant arguments, our  approach has
as indicated above a  vital
analogue 
in \cite{Sk2}. 

More precisely let  $\psi(t)= \e^{-\i tH}f(H)\psi$ denote an 
energy-localized scattering state, say  as defined for any   $f\in C^\infty_\c(\R)$
supported near a fixed real $\lambda$,  not be a threshold nor an
eigenvalue. Then we  use the Mourre estimate to find a suitable bounded
observable $P_+$, whose symbol is localized  to the `forward direction'
of the  phase space, such that  $\norm{\psi(t)- P_+\psi(t)}\to 0$ for $t\to
+\infty$. Secondly  we show that  Yafaev's observable
$M_{a_{\max}}=\Sigma_{a\neq a_{\max}}\,M_a$ effectively can be substituted, 
yielding for a suitable  bounded function $g_+\in C^\infty(\R)$, that
\begin{subequations}
\begin{equation}\label{eq:locIni}
  \norm[\Big]{\psi(t)- \sum_{a\neq
    a_{\max}}\,g_+(M_{a_{\max}})M_a \psi(t)}\to 0\text { for }t\to
+\infty.
\end{equation}
  
Once the initial localization \eqref{eq:locIni} is established asymptotic  completeness
follows by a standard   induction argument from the existence of the limits
\begin{equation}\label{eq:parta}
  \psi_a=\lim_{t\to +\infty}\,\e^{\i tH_a}g_+(M_{a_{\max}})M_a \psi(t), \quad a \neq a_{\max},
\end{equation}  
\end{subequations}
which in turn   is  shown by  commutator arguments essentially mimicking \cite{Ya1}, although with
additional minor  calculus issues.

\section{Preliminaries and organization of paper}\label{sec:preliminaries}

We explain an  abstract setting (generalizing the one of Section
\ref{sec:Introduction}), give an account of some basic
$N$-body  scattering  theory in this setting and we outline  the structure of the
paper. 
\subsection{$N$-body Hamiltonians, assumptions  and
  notation}\label{subsec:body Hamiltonians, limiting absorption
  principle  and notation}

First we explain our model and then we introduce a number of
useful general notation.

\subsubsection{Generalized $N$-body short-range  Hamiltonians}\label{subsubsec::Short-range N-body Hamiltonians}

Let $\bX$ be                    
a (nonzero) finite dimensional real inner product space,
equipped with a
finite family $\{\bX_a\}_{a\in \vA}$ of subspaces closed under intersection:
For any $a,b\in\mathcal A$ there exists $c\in\mathcal A$, denoted
$c=a\vee b$,  such that $\bX_a\cap\bX_b=\bX_c$.
 We
 order $\vA$ by  writing $a\leq b$ (or equivalently as $b\geq a$) if
$\bX_a\supseteq \bX_b$. 
It is assumed that there exist
$a_{\min},a_{\max}\in \vA$ such that 
$\bX_{a_{\min}}=\bX$ and 
$\bX_{a_{\max}}=\{0\}$. For convenience abbreviate  $a_0:=a_{\max}$. The subspaces $\bX_a$, $a\neq a_{\min} $,  are called  \emph{collision
 planes}. We will  use the notation $\vA'=\vA\setminus
  \{a_{0}\} $ and $d_a=\dim
\mathbf X_a$ for $a\in \vA'$. Let  $d=\dim
\mathbf X$. 
 
The $2$-body model (or more correctly named `the one-body model')  is
based on the structure 
 $\vA=\set{a_{\min},a_0}$. 
The scattering  theory for such models is well-understood,
in fact (here including long-range potentials) there are several
doable approaches, see for example \cite{DG,Is, Sk1} for  accounts  on time-dependent  and stationary
   scattering  theories. In this paper we  consider short-range potentials only, see the below Condition
   \ref{cond:smooth2wea3n12},  for which the scattering theory for the
   $2$-body as well as for the $N$-body model has a  canonical meaning
   (the long-range case is in this sense different).

Let $\bX^a\subseteq\bX$ be the orthogonal complement of $\bX_a\subseteq \bX$,
and denote the associated orthogonal decomposition of $x\in\bX$ by 
$$x=x^a\oplus x_a=\pi^ax\oplus \pi_ax\in\bX^a\oplus \bX_a.$$ 
The vectors $x^a$ and $x_a$ may be  called the \emph{internal
  component}  and the 
\emph{inter-cluster component}  of $x$, respectively. The momentum operator
$p=-\i \nabla$ decomposes similarly, $p=p^a\oplus p_a$. For $a\neq a_{\min} $ the  unit sphere  in $\mathbf X^{a}$ is denoted
by $\mathbf{S}^{a}$. 

A real-valued measurable function $V\colon\bX\to\mathbb R$ is 
a \textit{potential of many-body type} 
if there exist real-valued measurable functions
$V_a\colon\bX^a\to\mathbb R$ such that 
\begin{equation*}
V(x)=\sum_{a\in\mathcal A}V_a(x^a)\ \ \text{for }x\in\mathbf X.
\end{equation*} We take $V_{a_{\min}}=0$ and  impose throughout the paper the
following condition for    $a\neq a_{\min}$.  
\begin{cond}\label{cond:smooth2wea3n12}
    There exists $\mu\in (0,1/2)$  such that for all $a\in \vA\setminus\set{a_{\min}}$ the
    potential $V_a=V_a(x^a)$ fulfills:
    \begin{enumerate}
    \item \label{item:shortr}$V_a(-\Delta_{x^a}+1)^{-1}$ is compact. 
    \item \label{item:shortl}$\abs{x^a}^{1+2\mu}
      V_a(-\Delta_{x^a}+1)^{-1}$ is bounded.
\end{enumerate}
\end{cond}

 For any $a\in\vA$  we  introduce   associated  {Hamiltonians} $H^a$
 and $H_a$   as follows. 
For $a=a_{\min}$  we define
$H^{a_{\min}}=0$ on $\mathcal H^{a_{\min}}=L^2(\set{0})=\mathbb C $
and $H_{a_{\min}} =
p^2$ on $L^2(\bX)$, respectively. 
For $a\neq a_{\min}$ 
we let 
\begin{equation*}
V^a(x^a)=\sum_{b\leq a} V_{b}(x^b)
,\quad
x^a\in \bX^a,
\end{equation*} 
and  define  then 
\begin{equation*}
 H^a=-
\Delta_{x^a} +V^a\ \ 
\text{on }\mathcal H^a=L^2(\bX^a)\mand H_a=H^a \otimes I +I\otimes
p^2_a\text{ on }L^2(\bX).
\end{equation*} 
We  abbreviate 
\begin{align*}
V^{a_{0}}=V,\quad
 H^{a_{0}}=H,\quad 
 \mathcal H^{a_{0}}=\mathcal H=L^2(\bX).
\end{align*}
 The operator $H$ (with domain $
\mathcal D(H)=H^2(\mathbf X)$) is  the full
Hamiltonian of the $N$-body model, and its resolvent is denoted by
$R(z)=(H-z)^{-1}$; $z\in \C\setminus\R$.  The \textit{thresholds} of $H$ are by
definition the
eigenvalues of the  sub-Hamiltonians $H^a$; 
$a\in  \vA'$.
 Equivalently stated  the set of thresholds is 
\begin{equation*}
 \vT (H):= \bigcup_{a\in\vA'} \sigma_{\pupo}( H^a).
\end{equation*}
 This set 
is closed and countable. Moreover the set of non-threshold eigenvalues
  is discrete in $\R\setminus \vT (H)$,  and it 
 can only  accumulate  at 
$\vT (H)$.  
The essential spectrum is given by the formula
$\sigma_{\ess}(H)= \bigl[\min \vT(H),\infty\bigr)$. 
We introduce the notation $\vT_{\p}(H)=\sigma_{\pp}(H)\cup
  \vT(H)$, and more generally   $\vT_{\p}(H^a)=\sigma_{\pp}(H^a)\cup
  \vT(H^a)$ for  $a\in  \vA$ (with $\vT(H^{a_{\min}}):=\emptyset$).

\subsubsection{General 
  notation}\label{subsubsec:General 
  notation}

Any  function $f\in C^\infty_\c(\R)$ taking values in $[0,1]$ is
referred to as a  
\emph{support function}. Given  such
function  $f_1$ and  $f\in C^\infty_\c(\R)$  we write
$f_1\succ f$, if $f_1=1$ in a neighbourhood of $\supp f$.

If $T$ is a self-adjoint  operator on a
Hilbert space and $f$ is a support function we can represent the
operator $f(T)$ by the well known Helffer--Sj\"ostrand formula
\begin{align}\label{82a0}
  \begin{split}
 f(T) 
&=
\int _{\C}(T -z)^{-1}\,\mathrm d\mu_f(z)\, \text{ with}\\
&\mathrm d\mu_f(z)=\pi^{-1}(\bar\partial\tilde f)(z)\,\mathrm du\mathrm dv;\quad 
z=u+\i v.   
  \end{split}
\end{align} Here $\tilde{f}$ is an `almost analytic' extension of
$f$, which in this case may be taken compactly supported. The
formula \eqref{82a0} extends to more general  classes of  functions
and  serves as a standard tool for commuting  operators. It will be
important for this issue in the present  paper as well,
however some details will be left out of our exposition. Rather we
prefer 
at
several places to 
refer  to explicit results from \cite[Subsections 6.2--6.4]{Sk1}, all
of them 
be convenient   calculus applications of \eqref{82a0}.

Consider   and fix $\chi\in C^\infty(\mathbb{R})$ such that 
\begin{align*}
\chi(t)
=\left\{\begin{array}{ll}
0 &\mbox{ for } t \le 4/3, \\
1 &\mbox{ for } t \ge 5/3,
\end{array}
\right.
\quad
\chi'\geq  0,
\end{align*} and such that the following properties are fulfilled:
\begin{align*}
  \sqrt{\chi}, \sqrt{\chi'}, (1-\chi^2)^{1/4} ,
  \sqrt{-\parb{(1-\chi^2)^{1/2} }'}\in C^\infty.
\end{align*} We define correspondingly $\chi_+=\chi$ and
$\chi_-=(1-\chi^2)^{1/2} $ and record that
\begin{align*}
  \chi_+^2+\chi_-^2=1\mand\sqrt{\chi_+}, \sqrt{\chi_+'}, \sqrt{\chi_-}, \sqrt{-\chi_-'}\in C^\infty.
\end{align*}

We denote the space of  bounded operators 
from one  (abstract) Banach space $X$ to another one $Y$ by $\vL(X,Y)$ 
and abbreviate $\mathcal L(X,X)=\mathcal L(X)$.  If $T$ is a self-adjoint  operator on a
Hilbert space $\vG$ and $\varphi\in \vG$, then
$\inp{T}_\varphi:=\inp{\varphi,T\varphi}$.

\subsubsection{More
  notation}\label{subsubsec:More 
  notation}

We shall use the notation 
$\inp{x}=\parb{1+\abs{x}^2}^{1/2}$ for  $x\in \mathbf X$, and  we
write $\hat x=x/\abs{x}$ for $x\in \mathbf X\setminus\set{0}$.
 Denote the standard \emph{weighted $L^2$ spaces} by 
$$
L_s^2=L_s^2(\mathbf X)=\inp{x}^{-s}L^2(\mathbf X)\ \ \text{for
}s\in\mathbb R ,\quad  L_{-\infty}^2=\bigcup_{s\in\R}L^2_s,\quad
L^2_\infty=\bigcap_{s\in\mathbb R}L_s^2.
$$

Let $T$ be an  operator on $\mathcal H=L^2(\mathbf X)$ such that
$T,T^*:L^2_\infty\to L^2_\infty$, and let $t\in\mathbb R$.  Then we
say that   $T$ is an {\emph{operator of order $t$}}, if 
 for each $s\in\mathbb R$  the restriction  $T_{|L^2_\infty}$ extends to
 an operator $T_s\in\vL(L^2_{s}, L^2_{s-t})$. Alternatively stated,
 \begin{subequations}
  \begin{equation}\label{eq:defOrder}
\|\inp{x}^{s-t}T\inp{x}^{-s}\psi\|\le C_s\|\psi\| \text{ for all }\psi\in
L^2_\infty.
\end{equation} 
 If   $T$ is of {order $t$}, we write 
\begin{equation}
  T=\vO(\inp{x}^t)\,\text{ or }\,T=\vO(r^t).
\label{eq:1712022}
\end{equation} The latter notation is motivated   by
\eqref{eq:compa00} stated below.
\end{subequations}

\subsection{Basic  $N$-body scattering   theory}\label{subsec:body preliminaries}
Under a rather weak condition (in particular weaker  than Condition
\ref{cond:smooth2wea3n12}) it is demonstrated in  \cite{AIIS},  that 
for any $s>1/2$
  \begin{align}\label{eq:BB^*a}
    \sup_{0<\epsilon\le 1}\norm{R(\lambda\pm\i
    		\epsilon)}_{\vL\parb{L^2_s,L^2_{-s}}}&<\infty \text{ with 
    		locally
    		uniform
    		bounds in 
    	}\lambda\in\R\setminus\vT_\p(H),\nonumber\\
      &\text{and there exist the limits}\\
                                                              R(\lambda\pm \i
                                                              0)=\lim_{\epsilon\to 0_+} &\,R(\lambda\pm\i
                                                              \epsilon)\in \vL\parb{L^2_s,L^2_{-s}};\quad \lambda\in\R\setminus\vT_\p(H).\nonumber
   \end{align}
\begin{subequations}
 These results follow from a Mourre estimate for the operator $A:=r^{1/2}Br^{1/2}$,
where
\begin{equation}\label{eq:Mourre}
   B=r^{-1/2}Ar^{-1/2}=\i[H,r]=-\i\sum_{j\leq d}\parb{\omega_j\partial_{x_j}+\partial_{x_j}\omega_j},\quad  \omega=\mathop{\mathrm{grad}} r   
\end{equation}  and  $r $ is   a certain  smooth positive
function  on $\mathbf X$  (taken from \cite{De}) fulfilling   the
following property, cf.  \cite[Section 5]{Sk1}:
 For all $k\in\N_0:=\N\cup\set{0}$ and $\beta\in \N^{d}_0$   there
  exists $C>0$ such that 
\begin{equation}\label{eq:compa00}
  \abs[\big]{\partial ^\beta (x\cdot \nabla)^k \parb{r(x)-\inp{x}}} \leq C\inp{x}^{-1}.
\end{equation}
 Fixing any real $\lambda\not\in \vT_{\p}(H)$ we can indeed find such  function
(possibly taken rescaled, meaning that we replace $r(x)$ by
$Rr(x/R)$ with  $R\geq 1$   large)  and  a small 
open neighbourhood $U_\lambda$ of $\lambda$, such that
\begin{equation}\label{eq:mourreFull}
  \forall \text{ real } f\in C_\c^\infty(U_\lambda):\quad{f}(H)
  \i[H,A]{f}(H)\geq
 4c\,{f}(H)^2;
\end{equation} here the constant $c>0$ and its size  depends on the number
\begin{equation}\label{eq:optimal}
  d(\lambda,H)=\dist\parb{\lambda ,\set{E\in\vT (H)\mid E<\lambda}}.
\end{equation} In fact $c$  can be taken arbitrarily smaller than
the number $d(\lambda,H)$ (upon correspondingly adjusting the scaling parameter $R$
and the  neighbourhood $U_\lambda$), however to  avoid  ambiguities
we take  concretely $c=\sigma^2$,
   $\sigma/\sigma_0=10/11$, where   $\sigma_0:=\sqrt{d(\lambda,H)}$,
   which suffices for the applications in this paper.   
\end{subequations}

This   estimate \eqref{eq:mourreFull} entails \eqref{eq:BB^*a} and
therefore (by the familiar Kato theory \cite{Ka})  also 
the local decay 
bound
\begin{align}
  \begin{split}
  \forall  f\in &C_\c^\infty(U_\lambda)\,\forall \psi\in \vH:\int^\infty_{-\infty} \,\norm{\inp{x}^{-\delta}\psi(t)}^2\,\d t\leq
  C_\delta \norm{\psi}^2;\\&\quad \psi(t)=\e^{-\i t H}f(H)\psi,\quad \delta>1/2.\label{eq:smoothBnd1}  
  \end{split}
\end{align}

There are related forward  (and backward) bounds of propagation. Thus   
for example, for any fixed $\lambda\not\in
 \vT_{\p}(H)$,
    \begin{equation*}
      R(\lambda+ \i0)\psi-\chi_+(  B/\sigma_0) R(\lambda+ \i0)\psi\in L^2_{-1/2}
      \text{ for all }\psi\in \bigcup_{s>1/2}L^2_s,
    \end{equation*} cf.   \cite [Subsection 5.2]{Sk1}.   In
    Appendix \ref{sec:Strong bounds}  we prove 
    the following  analogous  bound 
\begin{align}\label{eq:MicroB0}
  \begin{split}
   \forall  f\in &C_\c^\infty(U_\lambda)\,\forall \psi\in \vH:\\
      &\quad \lim_{t\to +\infty}\norm{\psi(t)-\chi_+(  B/\sigma_0)\psi(t)) }=0;\quad \psi(t)=\e^{-\i t H}f(H)\psi. 
  \end{split}
\end{align} 

In addition to the smoothness bound \eqref{eq:smoothBnd1} there is a
class of such  bounds
    to be crucial  for our proof of asymptotic completeness (as it is in \cite{Ya1}). These    bounds are   stated as  \eqref{eq:smoothBnd2}.
The  assertion \eqref{eq:MicroB0} will play  a somewhat   intermediate role.

The above assertions for $H$ are clearly 
also valid for the
Hamiltonians $H_a$, $a\in\vA'$, and there are
  similar  assertions  for
the sub-Hamiltonains $H^a$ upon replacing $(H,r)$ by analogous  pairs
$(H^a,r^a)$.

Parallel to \eqref{eq:wave_opq} and \eqref{eq:ACo} we define  
a channel as a triple 
$\alpha =(a,\lambda^\alpha, u^\alpha)$, where    $a\neq a_{0}$, $\lambda^\alpha\in \R$ and  
$(H^a-\lambda^\alpha)u^\alpha=0$ for a normalized $u^\alpha\in
\mathcal H^a$. The corresponding channel wave operators are given as
the strong limits
\begin{subequations}
 \begin{equation}\label{eq:wave_opq2}
  W_\alpha^{\pm}=\slim_{t\to \pm\infty}\e^{\i tH}J_\alpha\e^{-\i
   t( -\Delta_{x_a}+\lambda^\alpha )};\quad J_\alpha \varphi
=u^\alpha\otimes
\varphi.
 \end{equation} With the  existence of the channel wave operators
 given,   asymptotic completeness means that
 \begin{equation}\label{eq:ACo2}
   \Sigma_\alpha^\oplus\,\, R( W_\alpha^{\pm})=\vH_{\rm ac}(H).
 \end{equation} 
\end{subequations}

  Our  proof of completeness  can easily be adjusted to a proof of the
existence of the  channel wave operators. It is a general fact  that the existence of the channel 
wave operators  implies their orthogonality, see for example
\cite[Theorem XI.36]{RS}.
Consequently we shall focus
on proving the `onto-property' of \eqref{eq:ACo2}  only, in fact only for  the forward
direction $t\to +\infty$, i.e. for the `plus' case. We devote Section
\ref{sec:Asymptotic completeness} to this issue.

As the reader will see in Subsection
\ref{subsec:A partition of unity} we will use \eqref{eq:MicroB0} 
 to construct   a partition of unity for  the  given
scattering state as in \eqref{eq:locIni}. The partition observables, whose building blocks
$(M_a)_{a\in \vA'}$ 
will be introduced  in Section \ref{sec: Yafaev type
  constructions}, will thanks to the local smoothness 
bounds 
\eqref{eq:smoothBnd1} and \eqref{eq:smoothBnd2} (and their analogous
versions for  the operators $H_a$) have controllable
commutator properties allowing us to construct limits as  in
\eqref{eq:parta}, indeed yielding  asymptotic completeness completeness.

\section{Yafaev's  constructions}\label{sec: Yafaev type
  constructions}

In \cite{Ya1} various  real functions $m_a\in C^\infty(\mathbf X)$,
$a\in\vA'$,  and $m_{a_{0}}\in C^\infty(\mathbf X)$  are 
constructed. They  are  homogeneous of degree $1$ for $\abs{x}> 5/6$,
and in addition $ m_{a_0}$ is (locally) convex in that region. These
functions are constructed as depending  on a  small parameter
$\epsilon>0$, and once given, they  are  used in the constructions
\begin{equation}\label{eq:M_a0}
 M_a=2\Re(w_a\cdot p)=-\i\sum_{j\leq d}\parb{(w_a)_j\partial_{x_j}+\partial_{x_j}(w_a)_j};\quad  w_a=\mathop{\mathrm{grad}} m_a.   
\end{equation}
The operators $M_a$, $a\in\vA'$,  play the role of  `channel
localization operators', while  operators on the form $M_{a_0}$
(parametrized by $\epsilon>0$) mainly are 
used  as  technical quantities  controlling  commutators of  
the Hamiltonian and the channel
localization operators. 
 In Section \ref{sec:Asymptotic completeness} we implement the
same ideas  for a proof of asymptotic completeness, although in a  different way than Yafaev did.

We  consider various conical
subsets of  $\mathbf X\setminus\set{0}$, defined for 
  $a\in\vA'$ and $\varepsilon,\delta\in (0,1)$,
\begin{align}\label{eq:primes}
  \begin{split}
    \mathbf X'_a&={\mathbf X_a }\setminus\cup_{{b\not\leq a,\,b\in
                     \vA}\,
                 }\mathbf X_b,\\
\mathbf X_a(\varepsilon)&=\set{x\in
  \bX\mid\abs{x_a}>(1-\varepsilon)\abs{x}},\\\quad
&\quad\quad\overline{\mathbf X_a(\varepsilon)}=\set{x\in \bX\setminus\set{0}\mid\abs{x_a}\geq(1-\varepsilon)\abs{x}},\\
\mathbf Y_a(\varepsilon)&=\parb{\mathbf X \setminus\set{0}}\setminus\cup_{{b\not\leq a,\,b\in \vA'}
}\,\mathbf X_b(\varepsilon),\\
\mathbf Z_a(\delta)&=\mathbf X_a(\delta)\setminus\cup_{b\not\leq a,\,b\in \vA'}
\,\overline{\mathbf X_b(3\delta^{1/d_a})};\quad\quad d_a=\dim
\mathbf X_a.
\end{split}
\end{align}
The structure of the sets $\mathbf X_a(\varepsilon)$, $\mathbf
Y_a(\varepsilon)$ and $\mathbf Z_a(\delta)$ is
${\R_+ V}$, where  $V$ is a
subset of the unit sphere $\mathbf{S}^{a_0}\subset\mathbf X$. For $\mathbf
X_a(\varepsilon)$ and $\mathbf Z_a(\delta)$ the set $V$ 
 is relatively open, while for  $\mathbf Y_a(\varepsilon)$ the  set is  compact.
We also record that $\mathbf X_a\setminus\set{0}\subseteq\mathbf
  X_a(\varepsilon)\subseteq \set{\abs{x^{a}}< \sqrt{2\ \varepsilon}\abs{x}}$,
and that $\mathbf Z_a(\delta)\subseteq \mathbf X_a(\delta)\cap\mathbf Y_a(3\delta^{1/d_a})$. The following inclusion is less obvious (we include
  its proof  from \cite{Sk2}).

\begin{lemma}\label{lemma:cover} For  any $a\in\vA'$ and $\varepsilon\in (0,1)$
\begin{equation*}
   \mathbf Y_a(\varepsilon)\subseteq\cup_{b\leq a}  \,\mathbf X'_b.
\end{equation*} 
  \end{lemma}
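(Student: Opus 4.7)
The plan is, given $x\in\mathbf Y_a(\varepsilon)$, to construct a canonical $c\in\vA$ with $x\in\mathbf X_c'$ and then argue $c\leq a$ from the defining property of $\mathbf Y_a(\varepsilon)$.

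First, I would use the intersection-closure hypothesis on the family $\{\mathbf X_b\}_{b\in\vA}$ to define $c\in\vA$ by
\begin{equation*}
 \mathbf X_c=\bigcap_{b\in\vA:\,x\in \mathbf X_b}\mathbf X_b.
\end{equation*}
The family on the right is nonempty (it contains $\mathbf X_{a_{\min}}=\mathbf X$) and finite, so iterated application of the operation $\vee$ yields such a $c\in\vA$ with $x\in\mathbf X_c$. By construction, whenever $d\in\vA$ satisfies $x\in\mathbf X_d$, we have $\mathbf X_c\subseteq \mathbf X_d$, i.e.\ $d\leq c$. Taking the contrapositive, $d\not\leq c$ forces $x\notin\mathbf X_d$, so $x\in\mathbf X_c\setminus\cup_{d\not\leq c}\mathbf X_d=\mathbf X'_c$.

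Next I would establish $c\leq a$. Since $x\in\mathbf X_c$, the orthogonal decomposition gives $x_c=\pi_c x=x$ and hence $|x_c|=|x|$. Because $x\in\mathbf Y_a(\varepsilon)\subseteq\mathbf X\setminus\{0\}$, we have $|x|>(1-\varepsilon)|x|$, so $x\in\mathbf X_c(\varepsilon)$. Moreover $c\in\vA'$, since $\mathbf X_{a_{0}}=\{0\}$ while $x\neq 0$ rules out $c=a_0$. Now if one had $c\not\leq a$, the definition of $\mathbf Y_a(\varepsilon)$ would give $x\notin \mathbf X_c(\varepsilon)$, a contradiction. Hence $c\leq a$, and together with $x\in\mathbf X'_c$ this places $x$ in $\cup_{b\leq a}\mathbf X'_b$, as required.

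There is no serious obstacle here; the only pitfall worth watching is keeping the order convention $a\leq b\Leftrightarrow\mathbf X_a\supseteq\mathbf X_b$ straight, so that the intersection defines the $\leq$-maximal index $c$ with $x\in\mathbf X_c$, rather than a minimal one.
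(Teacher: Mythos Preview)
Your proof is correct and follows essentially the same strategy as the paper: you take the maximal index $c$ with $x\in\mathbf X_c$ by intersecting the relevant $\mathbf X_b$'s and then invoke the definition of $\mathbf Y_a(\varepsilon)$ to pin down its relation to $a$. The only cosmetic difference is that the paper intersects over $\{b\leq a:\,x\in\mathbf X_b\}$ (so $b(x)\leq a$ is automatic and the $\mathbf Y_a$-property is used to get $x\in\mathbf X'_{b(x)}$), whereas you intersect over all $b$ (so $x\in\mathbf X'_c$ is automatic and the $\mathbf Y_a$-property is used to get $c\leq a$); for $x\in\mathbf Y_a(\varepsilon)$ these two indices coincide.
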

  \begin{proof}
    For any $x\in 
\mathbf Y_a(\varepsilon)$ we introduce $b(x)\leq a$ by  $\mathbf X_{b(x)}=\cap_{b\leq
a,\,x\in \mathbf X_b} \,\mathbf X_b$ and then check that $x\in \mathbf
X'_{b(x)}$: If not, $x\in \mathbf X_b \cap \mathbf X_{b(x)}$ for some
$b$ with $b(x)\lneq b\leq a$. Consequently 
    $\mathbf X_b=\mathbf X_b\cap \mathbf X_{b(x)}=\mathbf X_{b(x)}$,  contradicting that
  $b\neq b(x)$. 
  \end{proof}

The Yafaev constructions involve families of  cones $\mathbf X_a(\varepsilon)$ roughly with    `width'
$\varepsilon\approx\epsilon^{d_a}$ for a 
parameter $\epsilon>0$. This parameter is taken sufficiently small   as primarily 
determined  by the following
geometric property: 
There exists $C>0$ such that for all $a,b \in \vA'$ and $x\in\mathbf X$
\begin{equation}\label{eq:3.2}
  \abs{x^c}\leq C\parb{\abs{x^a}+\abs{x^b}};\quad c=a\vee b.
  \end{equation}
We note that \eqref{eq:3.2} is  also vital in Graf's constructions
\cite{Gra, De}.

\subsection{Yafaev's  homogeneous functions}\label{subsec: Homogeneous Yafaev type functions} 

For small $\epsilon>0$ and any $a\in \vA'$ we define
\begin{equation*}
  \varepsilon^a_k= k\epsilon^{d_a},\quad k=1,2,3,
\end{equation*} and view the numbers  $\varepsilon_a$ in the
interval 
$(\varepsilon^a_2, \varepsilon^a_3)$  as 
`admissible'. Alternatively, viewed as a free parameter, $\varepsilon_a$ is
called admissible if 
\begin{equation}\label{eq:admis}
  \varepsilon^a_2=2\epsilon^{d_a}<\varepsilon_a<3\epsilon^{d_a}=\varepsilon^a_3.
\end{equation} Using an  arbitrary
ordering of $\vA'$ we introduce for these  parameters  the `admissible' vector
\begin{equation*}
  \bar\varepsilon=(\varepsilon_{a_1}, \dots
\varepsilon_{a_n}),\quad n= \# \vA',
\end{equation*} and denote by $\d \bar\varepsilon$ the corresponding
Lebesgue measure. 

Let for $\varepsilon>0$ and $a\in \vA'$
\begin{equation*}
  h_{a,\varepsilon}(x)=(1+\varepsilon)\abs{x_a};\quad
  x\in \mathbf X\setminus \set{0}.
\end{equation*} 
 Letting  $\Theta= 1_{[0,\infty)}$,  we define for  any $a\in \vA'$ and any admissible vector $\bar\varepsilon$
  \begin{equation*}
    m_a(x, \bar \varepsilon)= h_{a,\varepsilon_a}(x)\Theta\parb{
      h_{a,\varepsilon_a}(x)-\max_{\vA' \ni c\neq a } h_{c,\varepsilon_c}(x)};\quad
  x\in\mathbf X\setminus \set{0}.
  \end{equation*} 
Introducing  for any  $c\in \vA'$ a
non-negative function $\varphi_c\in C^\infty_\c(\R)$ with
\begin{equation*}
  \supp \varphi_c\subseteq (\varepsilon^c_2,\varepsilon^c_3)
\mand \int_\R \varphi_c(\varepsilon)\,\d \varepsilon=1,
\end{equation*} we  average  the functions $ m_a(x,
\bar \varepsilon)$,     $a\in \vA'$, over $\bar
\varepsilon$ as    
\begin{equation*}
  m_a(x)=\int_{\R^n} m_a(x, \bar \varepsilon)\,\prod_{c\in \vA'}\, \varphi_c(\varepsilon_c)\,\d \bar\varepsilon;\quad x\in\mathbf X\setminus \set{0}.
\end{equation*} 
\begin{lemma}\label{lemma:ma1}
For any $a\in\vA'$ the  function 
 $m_a:\mathbf X\setminus \set{0}\to \R$ fulfills  the following
properties for any sufficiently small  $\epsilon>0$ and any $b\in\vA'$: 
\begin{enumerate}[1)]
\item\label{item:10a} $m_a$ is homogeneous of degree $1$.
\item\label{item:11a} $m_a\in C^\infty (\bX\setminus\set{0})$.
\item\label{item:12a} If $b\leq a$  and  $x\in \mathbf X_b(\varepsilon^b_1)$, then
  $m_a(x)=m_a(x_b)$.
\item\label{item:13a} If ${b\not\leq a}$ and $x\in \mathbf
  X_b(\varepsilon^b_1)$, then   $m_a(x)=0$.
\item\label{item:14a} If $a\neq {a_{\min}}$,  $x\neq 0$  and $\abs{x^{a}}\geq \sqrt{2\ \varepsilon^a_3}\abs{x}$, then   $m_a(x)=0$.
\end{enumerate} 
\end{lemma}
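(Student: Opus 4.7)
The plan is to verify the five properties in order, exploiting the dual role of the averaging over $\bar\varepsilon$: it smooths the Heaviside discontinuities for (2) and gives geometric tolerance for (3)--(5). Property (1) is immediate since each $h_{c,\varepsilon_c}(x)=(1+\varepsilon_c)|x_c|$ is positively homogeneous of degree one in $x$, and $\Theta$ applied to a degree-one expression is scale invariant; hence $m_a(x,\bar\varepsilon)$ has degree one, a feature preserved by averaging.

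For (2), I would first rewrite
\begin{equation*}
m_a(x,\bar\varepsilon)=h_{a,\varepsilon_a}(x)\prod_{c\in\vA'\setminus\{a\}}\Theta\bigl(h_{a,\varepsilon_a}(x)-h_{c,\varepsilon_c}(x)\bigr),
\end{equation*}
so that each Heaviside factor depends on a single $\varepsilon_c$. Integrating against $\varphi_c(\varepsilon_c)\,d\varepsilon_c$ yields a smooth factor $\Phi_c\bigl((1+\varepsilon_a)|x_a|/|x_c|-1\bigr)$, where $\Phi_c(t)=\int_{-\infty}^t\varphi_c(s)\,ds$ is smooth and, since $\supp\varphi_c\subseteq(\varepsilon^c_2,\varepsilon^c_3)$, constant outside this interval. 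This flatness at $\pm\infty$ absorbs the singularity of the inner map at $|x_c|=0$, keeping the composition smooth on $\bX\setminus\{0\}$; a final integration against $\varphi_a(\varepsilon_a)\,d\varepsilon_a$ preserves smoothness.

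Properties (4) and (5) amount to exhibiting, for each admissible $\bar\varepsilon$, a competitor $c\in\vA'\setminus\{a\}$ with $h_{c,\varepsilon_c}(x)>h_{a,\varepsilon_a}(x)$. For (5), the choice $c=a_{\min}$ (legitimate since $a\neq a_{\min}$) works: $|x^a|\geq\sqrt{2\varepsilon^a_3}\,|x|$ gives $h_{a,\varepsilon_a}(x)\leq(1+\varepsilon^a_3)\sqrt{1-2\varepsilon^a_3}\,|x|<|x|<(1+\varepsilon_{a_{\min}})|x|=h_{a_{\min},\varepsilon_{a_{\min}}}(x)$ for $\epsilon$ small. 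For (4), $b\not\leq a$ forces $a\vee b>a$ strictly, hence $d_{a\vee b}<d_a$ and $\varepsilon_{a\vee b}>\varepsilon_a$ for $\epsilon$ small. Writing $\pi_a x=\pi_{a\vee b}x+z$ with $z\in\bX_a\cap\bX^b$ and $|z|\leq|x^b|\leq\sqrt{2\varepsilon^b_1}\,|x|$, either $|\pi_{a\vee b}x|$ is comparable to $|\pi_a x|$ and $a\vee b\in\vA'$, in which case $h_{a\vee b,\varepsilon_{a\vee b}}(x)$ defeats $h_{a,\varepsilon_a}(x)$ via the $\varepsilon$-gap; or $|\pi_a x|\lesssim|x^b|$, in which case $h_{b,\varepsilon_b}(x)\geq(1+\varepsilon_b)(1-\varepsilon^b_1)|x|$ easily dominates (the case $a\vee b=a_0$ falls into this second alternative).

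Property (3) is the main obstacle. The inclusion $\bX_a\subseteq\bX_b$ gives $\pi_a x^b=0$, hence $\pi_a x=\pi_a x_b$ and $h_{a,\varepsilon_a}(x)=h_{a,\varepsilon_a}(x_b)$. For every competitor $c$, I would compare the Heaviside factors at $x$ and at $x_b$: for $c\geq b$ one has $\pi_c x^b=0$ and $|x_c|=|(x_b)_c|$, so the factors coincide identically; for $c\leq b$ the orthogonal decomposition $|x_c|^2=|(x_b)_c|^2+|\pi_c x^b|^2$ holds; for $c$ incomparable to $b$ one decomposes $\pi_c x^b$ further. The strategy is to show that for every admissible $\bar\varepsilon$, either (i) $a$ wins decisively over $c$ at both $x$ and $x_b$ (all Heaviside factors equal $1$), or (ii) the competitor $c=b$, for which the Heavisides agree automatically since $\pi_b x^b=0$, defeats $a$ at both points, making both $m_a$-integrands vanish. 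The delicacy lies in this case analysis: the $\varepsilon$-hierarchy $\varepsilon^c_k=k\epsilon^{d_c}$ combined with \eqref{eq:3.2} must be balanced against the tolerance $\varepsilon^b_1$ uniformly over $\bar\varepsilon$.
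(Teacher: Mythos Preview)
The paper does not supply a proof of this lemma; the construction and its properties are taken over from Yafaev \cite{Ya1} (and \cite{Sk2}), so there is nothing to compare against directly. Your arguments for \ref{item:10a}), \ref{item:11a}) and \ref{item:14a}) are sound, and the product rewriting of the Heaviside factor is the natural device.

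There is, however, a genuine gap in your argument for \ref{item:13a}). The decomposition $\pi_a x=\pi_{a\vee b}x+z$ only gives $z\in\bX_a\cap\bX^{a\vee b}$ and $|z|\le|x^{a\vee b}|$, not $z\in\bX_a\cap\bX^b$ with $|z|\le|x^b|$. A concrete counterexample: take $\bX=\R^2$, $\bX_a=\R e_1$, $\bX_b=\R(e_1+e_2)$ (so $a\vee b=a_0$), and $x=e_1+e_2\in\bX_b$; then $|x^b|=0$ while $|\pi_a x|=1$, so neither branch of your dichotomy applies. The repair is to use \eqref{eq:3.2}, which gives $|x^{a\vee b}|\le C(|x^a|+|x^b|)$, and to first dispose of the case $|x^a|\ge\sqrt{2\varepsilon^a_3}\,|x|$ via \ref{item:14a}). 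In the remaining regime one has $|x^{a\vee b}|=O(\epsilon^{\min(d_a,d_b)/2})|x|$; when $a\vee b\in\vA'$ and $a\not\le b$ both inequalities $d_{a\vee b}<d_a$ and $d_{a\vee b}<d_b$ hold, so the $\varepsilon$-gap $\varepsilon_{a\vee b}-\varepsilon_a\gtrsim\epsilon^{d_{a\vee b}}$ dominates the geometric error; the sub-case $a<b$ is handled directly with the competitor $c=b$ using $d_b<d_a$; and when $a\vee b=a_0$ the bound $|x|\le C(|x^a|+|x^b|)$ forces $|x^a|\gtrsim|x|$, so \ref{item:14a}) again applies.

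Your sketch for \ref{item:12a}) identifies the right dichotomy (either $a$ wins at both $x$ and $x_b$, or $b$ itself beats $a$ at both), but as you acknowledge, the case analysis for competitors $c$ incomparable to $b$ is not carried out; it requires the same combination of \eqref{eq:3.2} and the $\varepsilon$-hierarchy as above.
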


Following \cite{Ya1} we introduce for  $ x\in\mathbf X\setminus \set{0}$
and admissible vectors $\bar\varepsilon$
\begin{equation*}
   m_{a_0}(x,\bar\varepsilon)=\max_{a\in \vA'} h_{a,\varepsilon_a}(x),  
\end{equation*} and let then
\begin{equation*}
  m_{a_0}(x)=\int_{\R^n} m_{a_0}(x, \bar \varepsilon)\,\prod_{c\in
    \vA'}\, \varphi_c(\varepsilon_c)\,\d \bar\varepsilon;\quad x\neq 0.
\end{equation*}
\begin{lemma}[\cite{Ya1, Sk2}]\label{lemma:m1} The  function 
 $m_{a_0}:\mathbf X\setminus \set{0}\to \R$ fulfills  the following
properties for any sufficiently small  $\epsilon>0$: 
\begin{enumerate}[i)]
\item\label{item:10b} $m_{a_0}$ is convex and homogeneous of degree $1$.
\item\label{item:11b} $m_{a_0}\in C^\infty (\bX\setminus \set{0})$.
\item\label{item:12b} If $b\in\vA'$  and  $x\in \mathbf X_b(\varepsilon^b_1)$, then
  $m_{a_0}(x)=m_{a_0}(x_b)$.
\item\label{item:9b} $m_{a_0}=\sum_{a\in \vA'}\,m_a$  (with all functions defined for the same $\epsilon$).
\item\label{item:13b} Let $a\in \vA'$ and suppose $x\in\mathbf
  X_a(\varepsilon^a_1)$ obeys that for all  $b\in \vA'$
  with  $b\gneq
  a $ the vector $x\not\in\mathbf
  X_b(\varepsilon^b_3)$. Then 
  \begin{equation*}
    m_{a_0}(x)=m_{a}(x)=C_a
    \abs{x_a};\quad C_a=\int_\R (1+\varepsilon)\,\varphi_a(\varepsilon)\,\d \varepsilon.
  \end{equation*}
\item\label{item:15b} The derivative of $m_{a_0}\in C^\infty
  (\bX\setminus \set{0})$ obeys 
\begin{align*}
   \quad \quad\nabla \parb{m_{a_0}(x)-\abs{x}}&\\=\sum_{a\in \vA'}\,\int
   \,
 \d \varepsilon_a&
   \nabla \parb{{h_{a,\varepsilon_a}(x)}-\abs{x}}\,\varphi_a(\varepsilon_a)\\  &\prod_{\vA'\ni b\neq a    }\,\parbb{\int\,\Theta\parb{ h_{a,\varepsilon_a}(x)-h_{b,\varepsilon_b}(x)}\varphi_b(\varepsilon_b)\, \d\varepsilon_b}. 
\end{align*} In particular there exists $C>0$ (being independent of the
parameter  $\epsilon$) such that for all $x\in
\bX\setminus \set{0}$ 
\begin{equation}
  \label{eq:compa}
  \abs{\nabla \parb{m_{a_0}(x)-\abs{x}}}\leq C\sqrt{\epsilon}.
\end{equation}
\end{enumerate} 
\end{lemma}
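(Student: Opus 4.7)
My plan is to derive the representation (iv) first and obtain (i)--(iii) as immediate consequences, then verify (v) and (vi) by direct computation. The pivotal identity is that for a.e.\ $\bar\varepsilon$ and every $x\neq 0$ a unique index $a\in\vA'$ attains the maximum in $m_{a_0}(x,\bar\varepsilon)=\max_c h_{c,\varepsilon_c}(x)$, which gives the pointwise (in $\bar\varepsilon$, a.e.) identity
\begin{equation*}
\max_{c\in\vA'} h_{c,\varepsilon_c}(x) = \sum_{a\in\vA'} h_{a,\varepsilon_a}(x)\prod_{b\neq a}\Theta\parb{h_{a,\varepsilon_a}(x)-h_{b,\varepsilon_b}(x)}.
\end{equation*}
Integrating against $\prod_{c\in\vA'}\varphi_c(\varepsilon_c)\,\d\bar\varepsilon$ and comparing with the definition of $m_a$ yields (iv). From (iv), item (ii) follows from item~\ref{item:11a} of Lemma~\ref{lemma:ma1}, and (iii) follows from items~\ref{item:12a} and~\ref{item:13a} of the same lemma, with the observation that when $b\not\leq a$ we also have $x_b\in\bX_b\subseteq\bX_b(\varepsilon^b_1)$. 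Item (i) is immediate since each $h_{a,\varepsilon_a}(x)=(1+\varepsilon_a)|x_a|$ is convex and homogeneous of degree $1$, and both properties pass through pointwise maxima and through averaging against the fixed densities $\varphi_c$.

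For (v), assume $x\in\bX_a(\varepsilon^a_1)$ with $x\not\in\bX_b(\varepsilon^b_3)$ for every $b\in\vA'$ with $b\gneq a$. By (iv), it suffices to show $m_c(x)=0$ for $c\neq a$. For $c$ with $a\not\leq c$, item~\ref{item:13a} of Lemma~\ref{lemma:ma1} (relabelling its $a,b$ as $c,a$) gives $m_c(x)=0$. For $c\gneq a$, admissibility \eqref{eq:admis} and the hypotheses yield
\begin{equation*}
h_{a,\varepsilon_a}(x) > (1+2\epsilon^{d_a})(1-\epsilon^{d_a})|x| > |x| > (1+3\epsilon^{d_c})(1-3\epsilon^{d_c})|x| > h_{c,\varepsilon_c}(x)
\end{equation*}
for all sufficiently small $\epsilon$, so $c$ is never the argmax and $m_c(x)=0$. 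A parallel case analysis (using \eqref{eq:3.2} in the incomparable case) shows that in the admissible range $h_{a,\varepsilon_a}(x)$ strictly dominates every other $h_{c,\varepsilon_c}(x)$, hence the indicator in $m_a(x,\bar\varepsilon)$ equals $1$ throughout, and therefore $m_a(x)=\int (1+\varepsilon_a)\varphi_a(\varepsilon_a)\,\d\varepsilon_a\cdot|x_a| = C_a|x_a|$.

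For (vi) I differentiate under the integral, justified by dominated convergence since the integrand is smooth in $\bar\varepsilon$ a.e.\ and uniformly bounded, using the a.e.\ pointwise identity $\nabla_x\max_c h_{c,\varepsilon_c}(x)=\sum_a\nabla h_{a,\varepsilon_a}(x)\prod_{b\neq a}\Theta\parb{h_{a,\varepsilon_a}(x)-h_{b,\varepsilon_b}(x)}$. Applying Fubini to pull the $\varepsilon_a$-integral outside, and subtracting the analogous expansion of $\nabla|x|$ obtained from the a.e.\ partition-of-unity identity $\sum_a\prod_{b\neq a}\Theta\parb{h_{a,\varepsilon_a}(x)-h_{b,\varepsilon_b}(x)}=1$, produces the stated derivative formula verbatim. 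For the bound~\eqref{eq:compa}, whenever the $\Theta$-product is nonzero the admissible competition against $h_{a_{\min},\varepsilon_{a_{\min}}}(x)=(1+\varepsilon_{a_{\min}})|x|$ forces $|x_a|/|x|\geq 1-O(\epsilon)$ and hence $|x^a|/|x|=O(\sqrt{\epsilon})$; a short calculation in the decomposition $\bX=\bX^a\oplus\bX_a$ then shows that both components of $\nabla h_{a,\varepsilon_a}(x)-\nabla|x|$ are $O(\sqrt{\epsilon})$, yielding \eqref{eq:compa} with a constant proportional to $\#\vA'$. The main obstacle throughout is commuting differentiation past the nondifferentiability of the $\max$ function and the jumps of the $\Theta$'s; this is resolved precisely because the singular $\bar\varepsilon$-set has zero Lebesgue measure and the densities $\varphi_c$ are smooth.
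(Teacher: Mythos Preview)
The paper does not supply a proof of this lemma; it is stated with citation to \cite{Ya1, Sk2}, so there is no in-paper argument to compare against. Your approach is the natural one and is essentially correct, matching the line of reasoning in those references.

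Two points warrant tightening. First, the quantifiers in your uniqueness claim are reversed: it is false that for a.e.\ $\bar\varepsilon$ the maximum is uniquely attained for \emph{every} $x\neq 0$ (any fixed admissible $\bar\varepsilon$ admits ties for suitable $x$). What you need, and what actually holds, is that for each fixed $x\neq 0$ the set of $\bar\varepsilon$ producing a tie is contained in a finite union of hyperplanes, hence Lebesgue-null; this is enough both for the integral identity giving (iv) and for passing the $x$-derivative under the integral in (vi) via dominated convergence. Second, in (v) the ``parallel case analysis (using \eqref{eq:3.2})'' asserting that $h_{a,\varepsilon_a}$ strictly dominates every other $h_{c,\varepsilon_c}$ is not carried out, and in the incomparable case would require genuine work. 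It is also unnecessary: your Cases~1 and~2 already exhaust all $c\neq a$ (either $a\not\leq c$, or else $a\leq c$ with $c\neq a$, i.e.\ $c\gneq a$), so $m_c(x)=0$ for every $c\neq a$ and hence $m_{a_0}(x)=m_a(x)$. Since the nonnegative integrand $\max_c h_c - h_a\,\Theta\parb{h_a-\max_{c\neq a}h_c}$ then integrates to zero against $\prod_c\varphi_c$, it vanishes a.e., and by continuity in $\bar\varepsilon$ one obtains $h_a=\max_c h_c$ throughout $\supp\prod_c\varphi_c$; this gives $m_a(x)=C_a\abs{x_a}$ directly, without the extra casework.
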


\subsection{Yafaev's  observables and their commutator  properties}\label{subsec: Geometric considerations for a0=a{max}}
 We introduce smooth modifications of the functions $m_a$ from Lemma \ref{lemma:ma1}
and $m_{a_0}$ from Lemma \ref{lemma:m1}  by
multiplying them by a suitable factor, say specifically by the factor
$\chi_+(2|x|)$. We adapt these modifications and will (slightly
abusively) use  the same notation $m_a$ and $m_{a_0}$ for the smoothed out
versions.  We may then consider the corresponding first order
operators $M_a$ and $M_{a_0}$ from \eqref{eq:M_a0} as realized as self-adjoint
operators. 

In our application in 
Section \ref{sec:Asymptotic completeness}  the above Yafaev  observables  will be used to construct a suitable
 partition of unity based on which asymptotic completeness
is established. In the present subsection we provide
technical details on how to control their commutator with the
Hamiltonian $H$. Although not being treated explicitly, similar
arguments work for the  Hamiltonians $H_a$, $a\in\vA'$.

 Thanks to   Lemma \ref{lemma:cover}  we can for  any $a\in\vA'$ and  any  $\varepsilon, \delta_0\in (0,1)$  write
  \begin{equation*}
    \mathbf Y_a(\varepsilon)\subseteq\cup_{b\leq a} \cup_{\delta\in (0,\delta_0]} \,\mathbf Z_b(\delta).
  \end{equation*} 
  By compactness this   leads for
  any fixed $\varepsilon,\delta_0\in(0,1)$ to the existence of a finite
  covering
\begin{equation} \label{eq:deltaNBHb}
     \mathbf Y_a(\varepsilon)\subseteq\cup_{j\leq J} \,\,\mathbf Z_{b_j}(\delta_j),
  \end{equation} where  $\delta_1,\dots, \delta_J\in
(0,\delta_0]$ and  $b_1,\dots, b_J\leq a$; $J=J(a)$. Here and in the
following we prefer to  suppress
 the dependence of quantities on the given $a\in\vA'$.

 We recall   that  the
functions $m_a$, $a\in \vA'$, depend on a
positive parameter $\epsilon$ and  by Lemma  \ref{lemma:ma1}
\ref{item:13a}  fulfil 
\begin{equation}\label{eq:suppa}
  \supp m_a\subseteq 
\mathbf Y_a(\epsilon^{d});\quad d=\dim
\mathbf X.
\end{equation}
    Having  Lemmas \ref{lemma:ma1} and \ref{lemma:m1} at our
    disposal we also observe   that the (small) positive  parameter
    $\epsilon$ appears independently for  the two lemmas.  Let us 
    choose and fix the \emph{same}   small $\epsilon$ for the  lemmas,
    yielding in this way  the whole family  $(M_a)_{a\in \vA}$
    uniquely defined. Thanks to Lemma
\ref{lemma:m1} \ref{item:9b}  we can  then record that 
     $M_{a_0}=\sum_{a\in \vA'}\,M_a$,  which will  be  used  in
     Section \ref{sec:Asymptotic completeness} (more precisely  possibly with
     $\epsilon$ taken smaller, see Lemma \ref{lem:compBogB}).
\subsubsection{Controlling the commutators $\i[
H,M_a]$}\label{subsubsec: Controlling commutators}
We need addditional   applications of Lemma \ref{lemma:m1}, more
precisely for certain inputs $\epsilon_1,\dots,\epsilon_J\leq
\epsilon$ (with $J=J(a)$, 
$a\in\vA'$)
     defined  as follows. 
We apply \eqref{eq:deltaNBHb} with
    $\varepsilon=\delta_0=\epsilon^{d}$
    (cf. \eqref{eq:suppa}). Since $\delta_j\leq \delta_0$, we can
    introduce positive $\epsilon_1,\dots,\epsilon_J\leq \epsilon$ by  the
    requirement $\epsilon_j^{d_{b_j}}=\delta_j$. The 
    new inputs $\epsilon=\epsilon_j$ in Lemma \ref{lemma:m1}  yield
    corresponding   functions, say denoted   $m_j$. In
    particular in the region $\bZ_{b_j}(\delta_j)$ the function $m_a$
    from Lemma \ref{lemma:ma1} only
    depends on $x_{b_j}$ (thanks to   Lemma
    \ref{lemma:ma1} \ref{item:12a} and the  property
    $\bX_{b_j}(\delta_j)\subseteq \bX_{b_j}(\epsilon^{d_{b_j}})$),
    while (thanks to Lemma \ref{lemma:m1} \ref{item:13b})
    \begin{equation}\label{eq:good0}
      m_j(x)=C_j\abs{x_{b_j}},\quad C_j=\int_\R (1+\varepsilon)\,\varphi_{b_j}(\varepsilon)\,\d
      \varepsilon.
    \end{equation} 
Motivated by \eqref{eq:good0} we introduce   
the vector-valued first order operators 
\begin{equation}\label{eq:Ffield}
  G_b=\chi_+(2\abs{x_b})\abs{x_b}^{-1/2} P(x_b)\cdot
  p_b,\text{ where } P(x)=I-\abs{x}^{-2}\ket{ x}\bra{ x}.
\end{equation}  
 We  choose for the considered   $a\in\vA'$ 
   a quadratic partition $\hat\xi_1,\dots, \hat\xi_J\in
C^\infty(\mathbf{S}^{a_0})$ (viz $\sum_j \,{\hat\xi_j}^2=1$)
subordinate to the covering \eqref{eq:deltaNBHb} (recalling the discussion before
    Lemma \ref{lemma:cover}). Then
we can write, using  the support properties \eqref{eq:suppa} and \eqref{eq:deltaNBHb}, 
\begin{align*}
  m_a(x)=\sum_{j\leq J}\,\,m_{a,j}(x);\quad m_{a,j}(x)=\hat\xi^2_j( \hat x) m_a(x),\quad \hat x:=x/\abs{x},
\end{align*} and from the previous discussion it follows  that 
\begin{align*}
  \chi^2_+(\abs{x})m_{a,j}(x)=\xi^2_j
  m_{a}(x_{b_j}),\quad \xi_j=\xi_j(x):=\hat\xi_j( \hat x)\chi_+(\abs{x}),
\end{align*} as well as 
\begin{align}\label{eq:Hes0}
  \begin{split}
 p\cdot\parb{\chi^2_+(\abs{x})\nabla^2m_a(x)} p&=\sum_{j\leq J}\,\,
                                 p\cdot\parb{\xi^2_j\nabla^2m_a(x_{b_j})}p\\&
=\sum_{j\leq J}\, \,G^*_{b_j}\parb{\xi^2_j\vG_j}G_{b_j};\quad 
                                                                  \vG_j=\vG_j(x_{b_j})\text{ 
                                                                  bounded}.   
  \end{split}
\end{align} 
\begin{subequations}
We note that   used with appropriate energy-localization,  the left-hand side and 
$\tfrac 14 \i[
p^2,M_a]$ coincide
up to order $-3$.
By the convexity property of $m_j$ and \eqref{eq:good0}
\begin{align}\label{eq:Hes_est}
   G^*_{b_j}\xi_j^2
  G_{bj}\leq   p\cdot \parb{\chi^2_+(\abs{x})\nabla^2m_j(x)} p.
\end{align} 
Here the right-hand side  is the `leading term' of  the commutator
$\tfrac 14 \i[
p^2,M_j]$, where $M_j$ is given by \eqref{eq:M_a0} for  the
modification of 
$m_j$ given by the  function $\chi_+(2|x|)m_j(x)$. More precisely,
cf. \cite[Lemma 6.13]{Sk1} and recalling  \eqref{eq:1712022}, for any real $f\in C_\c^\infty(\R)$
\begin{align}\label{eq:1712022apll}
	\begin{split}
		f(H)\parbb{ p\cdot \parb{\chi^2_+(\abs{x})\nabla^2m_j(x)} p-\tfrac 14 \i[
H,&\chi_+(\abs{x})M_j\chi_+(\abs{x})] }f(H) \\&=\vO(\inp{x}^{-1-2\mu}).
\end{split}
\end{align}  
\end{subequations} We note that a basic  application of
\eqref{eq:Hes_est} and \eqref{eq:1712022apll}  comes from combining these
features  with \eqref{eq:smoothBnd1} in a commutator argument based on the `propagation observable'
\begin{equation*}
  \Psi=\Psi_j=\tfrac 14 f_1(H)\chi_+(\abs{x})M_j\chi_+(\abs{x})f_1(H),
\end{equation*} where 
$ f_1$ is any  narrowly supported  support
  function obeying  $f_1=1$ in a
neighbourhood of a given real  $\lambda\not\in \vT_{\p}(H)$, cf. \cite
[Subsection 7.1]{Sk1}. This argument, involving  computation  and
bounding of $\i[H,\Psi]$, leads  to the following smoothness
bounds:

With  $j\leq J=J(a)$, $\xi_j=\xi_j(x)$ and $G_{b_j}$ given as in
\eqref{eq:Hes0}, it follows that 
\begin{align}
  \begin{split}
   \forall  f\in &C_\c^\infty(U_\lambda)\,\forall \psi\in \vH:\quad \int^\infty_{-\infty} \,\norm[\big]{Q(a,j)\psi(t)}^2\,\d t\leq
  C \norm{\psi}^2;\\&\quad  Q(a,j)=\xi_jG_{b_j},\quad\psi(t)=\e^{-\i t H}f(H)\psi.\label{eq:smoothBnd2} 
  \end{split}
\end{align}
We will in Section \ref{sec:Asymptotic completeness} use
\eqref{eq:Hes0} and \eqref{eq:smoothBnd2} in combination with
\eqref{eq:smoothBnd1} to treat appearing expressions with factors of
the  commutator
$\i[
H,M_a]$; $a\in\vA'$.

\subsubsection{Localization and more calculus properties}\label{subsubsec: Geometric
  considerations cont} 
Choose for each $a\in\vA'$ a function  $\hat\xi_a\in
C^\infty(\mathbf{S}^{a_0})$  such that $\hat \xi_a=1$ in
$\mathbf{S}^{a_0}\cap\mathbf  Y_a(\epsilon^{d})$ and $\hat\xi_a=0$ on
$\mathbf{S}^{a_0}\setminus  \mathbf Y_a(\epsilon^{d}/2)$, and  let  $\xi_a=\xi_a(x)=\hat\xi_a(\hat
x)\chi_+(4\abs{x})$.  It then follows from Lemma
\ref{lemma:ma1} \ref{item:13a} that the operator $M_a$ fulfills   
\begin{align}\label{eq:partM}
  M_a= M_a\xi_a= \xi_aM_a,
\end{align} which is  useful for replacing $H$ by
$H_a$ (or vice versa) in the presence of a factor $M_a$. For our
application  in 
\eqref{eq:bndInterclust} commutation
applies smoothly. A  trivial application of
\eqref{82a0} yields the related commutation result  
\begin{equation}\label{eq:rCalc1}
  \forall f\in C_\c^\infty(\R):\quad
  \i[f(H),T]=\vO(\inp{x}^{-1})\inp{p}^{-1};\quad T=\xi_a.
\end{equation}
\begin{subequations}

There are other   such `smooth' commutation results, for example the
bound 
\eqref{eq:rCalc1} also holds  for $T=M_a$ as well as  for
$T=\abs{x}^{1/2}Q(a,j)=\abs{x}^{1/2}\xi_j(x)G_{b_j}$. For the latter
results one can actually   based on  
 \eqref{82a0}  easily expand to first order as 
\begin{align}\label{eq:rCalc2}
  \forall f\in C_\c^\infty(\R):\quad
  \i[f(H),M_a]&=\i[p^2,M_a]f'(H)+\vO(\inp{x}^{-1-2\mu})\inp{p}^{-1},\\
\forall f\in C_\c^\infty(\R):\,
  \i[f(H),Q(a,j)]&=\i[p^2,Q(a,j)]f'(H)+\vO(\inp{x}^{-3/2-2\mu})\inp{p}^{-1},
\label{eq:rCalc3}
\end{align} cf. \cite[Lemma 6.13]{Sk1}.  We shall in  Section
\ref{sec:Asymptotic completeness} use 
\eqref{eq:rCalc2} and \eqref{eq:rCalc3} as well as the  adjoint
form of \eqref{eq:rCalc2}. 
\end{subequations}

\section{Asymptotic completeness}\label{sec:Asymptotic completeness}
Taking for convenience  the  existence of the channel wave operators of
\eqref{eq:wave_opq2} for
granted (it  can be shown by a similar reasoning as the one  presented below),  we prove in this section the
following main result.
\begin{thm}\label{thm:asympt-compl} The channel wave operators
  $W_\alpha^+$ of \eqref{eq:wave_opq2}  are complete in the sense
  that their ranges span the
 absolutely continuous subspace  of $H$  
  \begin{equation*}
   \Sigma_\alpha^\oplus\,\, R( W_\alpha^+)=\vH_{\rm ac}(H).
 \end{equation*} 
\end{thm}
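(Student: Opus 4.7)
The approach follows the three-step outline given in the introduction. By density and a routine reduction it suffices to prove $\psi \in \overline{\Sigma_\alpha^\oplus R(W_\alpha^+)}$ for vectors of the form $\psi = f(H)\psi$ with $f \in C_c^\infty(U_\lambda)$ for some non-threshold, non-eigenvalue $\lambda$. \emph{Step 1} (initial localization) starts from \eqref{eq:MicroB0} and replaces $\chi_+(B/\sigma_0)$ by $\sum_{a \in \vA'} g_+(M_{a_0}) M_a = g_+(M_{a_0}) M_{a_0}$ for a suitably chosen bounded $g_+ \in C^\infty(\R)$ with $g_+(s)s\approx 1$ on the relevant range. The key input is \eqref{eq:compa}: since $\abs{\nabla(m_{a_0}(x)-\abs{x})} = O(\sqrt\epsilon)$, rescaling and the Helffer--Sj\"ostrand calculus \eqref{82a0} show that $M_{a_0}-B$ acts as a lower order perturbation on $\ran f(H)$, which allows the substitution. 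The identity $M_{a_0} = \sum_{a \in \vA'} M_a$ (Lemma \ref{lemma:m1}\ref{item:9b}) then delivers \eqref{eq:locIni}.

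\emph{Step 2} (existence of $\psi_a$). For each $a \in \vA'$ set $A_a := g_+(M_{a_0})M_a$. The derivative
\begin{align*}
  \tfrac{\d}{\d t}\parb{\e^{\i tH_a}A_a\psi(t)} = \i\e^{\i tH_a}\parb{[H_a,A_a] - A_a(H-H_a)}\psi(t)
\end{align*}
is analyzed term by term. By \eqref{eq:partM} the factor $M_a$ localizes $x$ into $\mathbf Y_a(\epsilon^d/2)$, and on that set $V-V^a$ is of order $\inp{x}^{-1-2\mu}$; consequently $\norm{A_a(H-H_a)\psi(t)}$ is integrable in $t$ by \eqref{eq:smoothBnd1}. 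The commutator $[H_a,A_a]$ is expanded to principal order using \eqref{eq:rCalc2}--\eqref{eq:rCalc3}; inserting the decomposition \eqref{eq:Hes0} and applying the convexity bound \eqref{eq:Hes_est}, the resulting quadratic form is dominated in Cauchy--Schwarz by the Kato smoothness bound \eqref{eq:smoothBnd2} for $\psi(t)$ together with its $H_a$-analogue applied to the test vector $\e^{-\i tH_a}\varphi$. Integration yields the Cauchy criterion and establishes the limits \eqref{eq:parta}.

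\emph{Step 3} (induction on $\dim \bX$). For $\dim \bX$ minimal the statement reduces to two-body theory and is classical. Assuming the theorem for every $H^a$ with $a \in \vA'$, the limit $\psi_a \in \mathcal H = \mathcal H^a \otimes L^2(\bX_a)$ is decomposed according to the spectral resolution of $H^a$: each $H^a$-eigenvector $u^\alpha$ contributes a vector in $R(W_\alpha^+)$ by \eqref{eq:wave_opq2}, while the continuous-spectrum part in $\mathcal H_c(H^a) \otimes L^2(\bX_a)$ is resolved by the inductive hypothesis applied to $H^a$, whose configuration space $\bX^a$ has strictly smaller dimension than $\bX$. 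Summing over $a$ and using \eqref{eq:locIni} exhibits $\psi$ as an element of $\overline{\Sigma_\alpha^\oplus R(W_\alpha^+)}$. The principal obstacle I anticipate lies in Step 2: extracting from $[H_a, g_+(M_{a_0})M_a]$ a principal part that is a nonnegative quadratic form controlled by \eqref{eq:smoothBnd2}, with all calculus remainders absorbed by \eqref{eq:smoothBnd1}. This rests on the convexity of $m_{a_0}$ (Lemma \ref{lemma:m1}\ref{item:10b}), the matching of the partitions \eqref{eq:suppa}--\eqref{eq:deltaNBHb}, and careful bookkeeping with \eqref{82a0}; it is precisely the calculus layer in which the present approach differs from Yafaev's more direct treatment.
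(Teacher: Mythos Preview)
Your three-step plan coincides with the paper's strategy, and the tools you invoke are exactly the right ones.  Two points in Step~2 need correction, however.  First, $m_a$ is \emph{not} convex (only $m_{a_0}$ and the auxiliary functions $m_j$ of Subsection~\ref{subsubsec: Controlling commutators} are), so the principal part of $\i[H,M_a]$ carries no sign; the convexity bound \eqref{eq:Hes_est} is used once only, namely to derive the smoothness bound \eqref{eq:smoothBnd2}, and is not applied again.  In the existence argument one substitutes the signless representation \eqref{eq:Hes0} and bounds each term $G_{b_j}^*(\xi_j^2\vG_j)G_{b_j}$ by Cauchy--Schwarz together with two applications of \eqref{eq:smoothBnd2} (one for $\psi$, one for the $H_a$-evolved test vector).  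Second, the paper inserts the cutoff $f_1(H_a)$ into the limiting object \eqref{eq:part2} and argues weakly against $\phi_1(t)=\e^{-\i tH_a}f_1(H_a)\phi$; the intercluster contribution then appears on the \emph{left} as $(\brf_2(H_a)-\brf_2(H))S_af(H)=\vO(r^{-1-2\mu})$, obtained via the resolvent identity and a double commutator with $\xi_a$ (cf.\ \eqref{eq:bndInterclust}).  Your placement $A_a(H-H_a)$ puts $I_a=\sum_{b\nleq a}V_b$ to the right of $M_a$, which under Condition~\ref{cond:smooth2wea3n12} (no differentiability of $V_b$) requires extra care to justify.  A last terminological slip: $M_{a_0}-B$ is not ``lower order'' but of the same (zeroth) order with small norm $O(\sqrt\epsilon)$ modulo a genuinely lower-order remainder; this is precisely what Lemma~\ref{lem:compBogB} exploits.
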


 By an elementary  density and covering argument, 
combined with  an
   induction argument, Theorem \ref{thm:asympt-compl} is a 
 consequence of
 the following result.
 \begin{subequations}
 \begin{lemma}\label{lemma:asympt-compl} Let
   $\lambda\in\R\setminus\vT_\p(H)$. There exist $H$-bounded operators
    $(S_a)_{a\in \vA'}$ and an 
open neighbourhood $U$  of $\lambda$ such that for any  $f\in
C_\c^\infty(U)$,
    any  support
  function
$f_1\succ f$, also supported in $U$, the following properties hold for
$\psi(t)=\e^{-\i t H}f(H)\psi$, $\psi\in \vH$. 
 \begin{equation}\label{eq:part}
    \norm[\Big]{\psi(t) - \sum_{a\in \vA'}\,f_1(H_a)S_a \psi(t)}\to 0\text{ for
    }t\to +\infty.
  \end{equation} 
   \begin{equation}\label{eq:part2}
  \text{ There exist the limits}: \quad \psi_a=\lim_{t\to +\infty}\,\e^{\i tH_a}f_1(H_a)S_a\psi(t); \quad a\in \vA' .
\end{equation} 
\end{lemma}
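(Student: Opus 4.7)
The plan is to take $S_a := g_+(M_{a_0}) M_a$ for each $a \in \vA'$, where $g_+\in C^\infty(\R)$ is a bounded function with $tg_+(t) = 1$ for $t \ge \sigma - C\sqrt{\epsilon}$ (with $C$ the constant of \eqref{eq:compa}) and $\supp g_+\subseteq (\sigma/2,\infty)$; here $\epsilon>0$ is chosen so small that $C\sqrt{\epsilon}<\sigma_0-\sigma$. Fix $U=U_\lambda$ from \eqref{eq:mourreFull}. Then each $S_a$ is $H$-bounded, and crucially $\sum_a S_a = g_+(M_{a_0}) M_{a_0}$ by Lemma \ref{lemma:m1}\ref{item:9b}, which equals the identity on $\{M_{a_0}\ge \sigma - C\sqrt{\epsilon}\}$.

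For \eqref{eq:part}, I start from the microlocal estimate \eqref{eq:MicroB0}: $\|\psi(t)-\chi_+(B/\sigma_0)\psi(t)\|\to 0$. The symbols of $B$ and $M_{a_0}$ are $2\nabla r\cdot p$ and $2\nabla m_{a_0}\cdot p$, and their gradients differ by $\vO(\sqrt{\epsilon})$ thanks to \eqref{eq:compa} and \eqref{eq:compa00}. A Helffer--Sj\"ostrand calculation combined with energy-localization (to absorb the resulting $\inp{p}$-factors) yields
\[
\bigl\| f(H)(\chi_+(B/\sigma_0) - g_+(M_{a_0})M_{a_0})f(H)\bigr\| = \vO(\sqrt{\epsilon}),
\]
and since on $\{\chi_+(B/\sigma_0)=1\}$ one has $M_{a_0}\ge (4/3)\sigma_0 - C\sqrt{\epsilon}$ where $g_+(M_{a_0})M_{a_0}=1$, the contribution surviving on $\psi(t)$ as $t\to+\infty$ is negligible. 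This yields $\|\psi(t) - \sum_a g_+(M_{a_0})M_a\psi(t)\|\to 0$. To insert $f_1(H_a)$ I exploit $M_a = \xi_aM_a$ from \eqref{eq:partM}, with $\xi_a$ supported in $\mathbf{Y}_a(\epsilon^d/2)$; there $|x^b|\gtrsim |x|$ for all $b\not\le a$, so $I_a:=H-H_a = \sum_{b\not\le a} V_b(x^b)$ satisfies $I_a\xi_a = \vO(\inp{x}^{-1-2\mu})$. A resolvent expansion for $f(H)-f_1(H_a)f(H)$, commuted past $\xi_a$ via \eqref{eq:rCalc1}, then produces errors of order $\vO(\inp{x}^{-1-2\mu})$ which vanish along $\psi(t)$ by \eqref{eq:smoothBnd1}.

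For \eqref{eq:part2}, I apply Cook's method to $\Phi_a(t) := e^{itH_a}f_1(H_a)S_a\psi(t)$, computing
\[
\tfrac{d}{dt}\Phi_a(t) = e^{itH_a}f_1(H_a)\bigl\{i[H_a,S_a] - iS_a I_a\bigr\}\psi(t).
\]
The interaction term $S_aI_a$ becomes $\vO(\inp{x}^{-1-2\mu})$ after pushing $M_a$ through to $\xi_a$, hence is integrable in $t$ along $\psi(t)$ by \eqref{eq:smoothBnd1}. I split the commutator as $g_+(M_{a_0})[H_a,M_a] + [H_a,g_+(M_{a_0})]M_a$. Swapping $H_a\leftrightarrow H$ in the first term contributes only an $\vO(\inp{x}^{-1-2\mu})$ error, after which \eqref{eq:Hes0}--\eqref{eq:1712022apll} express the leading Hessian part as $\sum_j G_{b_j}^*\xi_j^2\vG_jG_{b_j}$, and a Cauchy--Schwarz argument based on \eqref{eq:smoothBnd2} furnishes the needed integrability. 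For the second piece I expand $g_+(M_{a_0})$ via Helffer--Sj\"ostrand, producing $-g_+'(M_{a_0})[M_{a_0},H_a]M_a$ plus $\vO(\inp{x}^{-1-2\mu})$; the Hessian calculus applied to $m_{a_0}$ together with the analogous smoothness bounds for the sub-Hamiltonians $H_a$ then handles this contribution.

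The main obstacle is controlling the mixed commutator $[H_a,g_+(M_{a_0})]M_a$: since $M_{a_0}$ involves symbols tied to channels $b\not\le a$ for which $H_a$-dynamics is not naturally compatible, the cross-terms are delicate. The saving point is Lemma \ref{lemma:m1}\ref{item:13b}, which implies that on the bulk of the support of $M_a$ (in the region $\mathbf{X}_a(\varepsilon^a_1)\setminus\bigcup_{b\gneq a}\mathbf{X}_b(\varepsilon^b_3)$) one has $m_{a_0}=m_a=C_a|x_a|$ depending only on $x_a$; on the complementary sub-regions of $\mathbf{Y}_a(\epsilon^d/2)$ one uses the covering \eqref{eq:deltaNBHb} and the Hessian-type bound \eqref{eq:Hes_est} together with \eqref{eq:smoothBnd2}. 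Careful bookkeeping of the $\vO(\inp{x}^t)$ remainders, and the placement of the $f_1(H_a)$-cutoffs relative to $\xi_a$, is where the bulk of the technical effort lies --- essentially following the scheme of \cite{Ya1} but adapted to the partition-of-unity framework with $f_1(H_a)S_a$ in place of Yafaev's native observables.
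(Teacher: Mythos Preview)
Your overall strategy matches the paper's: take $S_a = g_+(M_{a_0})M_a$, derive \eqref{eq:part} from the microlocal input \eqref{eq:MicroB0}, and prove \eqref{eq:part2} via Cook's method combined with the Hessian calculus \eqref{eq:Hes0}--\eqref{eq:smoothBnd2}. However, your argument for \eqref{eq:part} has a genuine gap. The displayed bound $\|f(H)(\chi_+(B/\sigma_0) - g_+(M_{a_0})M_{a_0})f(H)\| = \vO(\sqrt\epsilon)$ is false: the functions $b\mapsto \chi_+(b/\sigma_0)$ and $b\mapsto bg_+(b)$ transition from $0$ to $1$ on disjoint intervals (near $(4/3)\sigma_0$ versus near $\sigma/2$), so even if $B$ and $M_{a_0}$ coincided the difference would be $\vO(1)$, not small. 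What \eqref{eq:compa00} and \eqref{eq:compa} actually buy is that the \emph{product} $\parb{1-g_+(M_{a_0})M_{a_0}}\chi_+(B/\sigma_0)f_1(H)$ has incompatible symbol supports, and repeated commutation converts this into $\vO(r^{-1/2})$ --- spatial decay, not smallness in the fixed parameter $\epsilon$; this is precisely Lemma~\ref{lem:compBogB}. But $\vO(r^{-1/2})$ applied to $\psi(t)$ does not by itself tend to zero. You first need to know that $L:=\lim_{t\to\infty}\norm{\parb{1-g_+(M_{a_0})M_{a_0}}\psi(t)}^2$ \emph{exists}, which the paper establishes by a separate differentiation/integration argument using \eqref{eq:smoothBnd1} and \eqref{eq:smoothBnd2} (Step~II of Lemma~\ref{lemma:phase-space-part1}); only then does the $\vO(r^{-1/2})$ bound, together with local decay and a subsequence extraction, force $L=0$. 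Your sketch omits both ingredients.

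For \eqref{eq:part2} your Cook-type computation is essentially the paper's, though organized less cleanly: the paper works directly with $\i\parb{\breve f_2(H_a)S_a - S_a\breve f_2(H)}f(H)=T_1+T_2+T_3$ and never swaps $H_a\leftrightarrow H$ inside commutators. Your treatment of the ``main obstacle'' $[H_a,g_+(M_{a_0})]M_a$ via Lemma~\ref{lemma:m1}\ref{item:13b} is a detour --- $m_{a_0}$ does \emph{not} reduce to $C_a|x_a|$ on all of $\supp m_a\subseteq\mathbf Y_a(\epsilon^d)$, only on a strict subregion, so the case distinction you propose does not close. The paper's route is more direct: write $\i[H,M_{a_0}]=\sum_{b\in\vA'}\i[H,M_b]$, apply \eqref{eq:Hes0} to each $b$, commute the resulting factors $Q(b,j)$ outward past $g_+'(M_{a_0})$ and $f(H)M_a$ (errors $\vO(r^{-3/2})$ via \eqref{eq:rCalc3}), and invoke \eqref{eq:smoothBnd2} on both the $H$-orbit $\psi_1(t)$ and the $H_a$-orbit $\phi_1(t)$ of the dual vector.
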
  
 \end{subequations} In fact, given Lemma \ref{lemma:asympt-compl},  it
 follows that 
\begin{equation*}
    \norm[\Big]{\psi(t) - \sum_{a\in \vA'}\,\e^{-\i tH_a}\psi_a}\to 0\text{ for
    }t\to +\infty,
  \end{equation*} and by introducing a suitable decomposition in the summation,
  \begin{equation*}
    I=f^a(H^a)+\parb{I-f^a(H^a)},\quad f^a\in C_\c^\infty(\R\setminus{\vT_\p(H^a))},
  \end{equation*}
  the contribution from the first term can
  be treated by induction while the second  one  agrees (up to an arbitrary
  error) with
  the completeness assertion, cf. \cite{SS}.

\subsection{Partition of unity}\label{subsec:A partition of unity}

Let  in this subsection $\lambda\notin \vT_\p(H)$,  $\psi\in \vH$ and $f\in
C_\c^\infty(U_\lambda)$ be given as in 
\eqref{eq:smoothBnd1} and \eqref{eq:MicroB0}. We shall
 localize the state $\psi(t)=\e^{\i t H}f(H)\psi$ for large $t$. This will be in terms of an `effective 
  partition of unity' $I\approx \Sigma_{a\in \vA'}\,S_a$, meaning more
  precisely that \eqref{eq:part} holds, taking here and below  $U=U_\lambda$. We  show
  \eqref{eq:part} with the observables $S_a$ as   specified in
  \eqref{eq:gDef} (given in terms of a small parameter $\epsilon$), leaving the  completion of  the proof of Lemma
  \ref{lemma:asympt-compl}  to  Subsection \ref{subsec:Proof of Lemma ref{lemma:asympt-compl}}.
The localization bound \eqref{eq:MicroB0} (involving the  operator
$\chi_+(B/\sigma_0)$) will play  an intermediate role  only.  

Recall that the
  construction of $(M_a)_{a\in \vA}$ in Subsection
  \ref{subsec: Geometric considerations for a0=a{max}} is based on an
  arbitrary  
small parameter
  $\epsilon>0$. We need the following  elementary result from \cite{Sk2},
  recalling  our notation \eqref{eq:1712022} and for the proof also the notation $\inp{T}_{\varphi}=\inp{\varphi,T\varphi}$.
\begin{lemma}\label{lem:compBogB}
 Let the   parameter
  $\epsilon$ in the construction of $M_{a_0}$  be sufficiently small
  (depending on the fixed $\lambda\notin \vT_\p(H)$),
  and let $f_1\in C_\c^\infty(\R)$ be given. Then 
  \begin{equation*}
    \chi_-(2M_{a_0}/\sigma_0)\chi_+(B/\sigma_0)f_1(H)=\vO(r^{-1/2}).
  \end{equation*}
\end{lemma}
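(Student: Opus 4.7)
The starting observation is that the cutoffs $\chi_-(2\cdot/\sigma_0)$ and $\chi_+(\cdot/\sigma_0)$ have disjoint supports on $\R$: $\chi_-(2s/\sigma_0)$ is supported in $\{s\le 5\sigma_0/6\}$ while $\chi_+(s/\sigma_0)$ is supported in $\{s\ge 4\sigma_0/3\}$, and $5/6<4/3$. Hence by the functional calculus for the self-adjoint operator $B$ we have $\chi_-(2B/\sigma_0)\chi_+(B/\sigma_0)=0$, and subtracting this null operator yields the rewriting
\[\chi_-(2M_{a_0}/\sigma_0)\chi_+(B/\sigma_0)f_1(H)=\bigl[\chi_-(2M_{a_0}/\sigma_0)-\chi_-(2B/\sigma_0)\bigr]\chi_+(B/\sigma_0)f_1(H),\]
which reduces the problem to controlling the functional-calculus difference in brackets.

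The plan is then to apply the Helffer--Sj\"ostrand formula \eqref{82a0} to $g(t):=\chi_-(2t/\sigma_0)$ with compactly supported almost-analytic extension $\tilde g$, and to combine it with the first resolvent identity to obtain
\[g(M_{a_0})-g(B)=\tfrac{1}{\pi}\int(\bar\partial\tilde g)(z)\,(M_{a_0}-z)^{-1}(B-M_{a_0})(B-z)^{-1}\,\d u\,\d v.\]
The pivotal quantity is then $B-M_{a_0}=2\Re(v\cdot p)$ with $v=\nabla(r-m_{a_0})=\nabla(r-\inp{x})-\nabla(m_{a_0}-\inp{x})$. By \eqref{eq:compa00} (taken with $k=0$ and $|\beta|=1$) the first piece is $\vO(\inp{x}^{-1})$, while Lemma \ref{lemma:m1}\ref{item:15b}, specifically \eqref{eq:compa}, controls the second piece pointwise by $C\sqrt{\epsilon}$. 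I would therefore split $v=v_1+v_2$ with $\|v_1\|_\infty\le C\sqrt{\epsilon}$ and $|v_2(x)|\le C\inp{x}^{-1}$.

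The $v_2$-contribution is easy: since $f_1(H)$ bounds $|p|$, the operator $2\Re(v_2\cdot p)f_1(H)$ is of order $\vO(\inp{x}^{-1})$, which is better than required. Commuting $f_1(H)$ past the two resolvents using \eqref{eq:rCalc1}--\eqref{eq:rCalc3} generates only $\vO(\inp{x}^{-1})$ errors, and the $\bar\partial\tilde g$-decay on $\supp\tilde g$ ensures the $z$-integral converges absolutely in the appropriate weighted operator norm. For the $v_1$-contribution, the same commutator manipulation combined with the smallness $\|v_1\|_\infty\le C\sqrt{\epsilon}$ yields an $L^2$-operator norm bound of $O(\sqrt{\epsilon})$; to upgrade this to the weighted $\vO(r^{-1/2})$ order bound, I would exploit the factorisation $B=r^{-1/2}A r^{-1/2}$ and the weight-transfer it induces through $\chi_+(B/\sigma_0)$ on the range of $f_1(H)$, in the spirit of \cite{Sk2}.

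The main obstacle is precisely this last step: converting the $O(\sqrt{\epsilon})$-small $L^2\to L^2$ bound on the $v_1$-part into a genuine weighted order-$(-1/2)$ estimate. This is where the Mourre estimate \eqref{eq:mourreFull} for $A=r^{1/2}Br^{1/2}$ and the identity $B=r^{-1/2}Ar^{-1/2}$ must be invoked to extract the weight $r^{-1/2}$; it is also what dictates how small $\epsilon$ must be taken (depending on $\sigma_0=\sqrt{d(\lambda,H)}$).
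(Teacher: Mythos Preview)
Your opening observation that $\chi_-(2B/\sigma_0)\chi_+(B/\sigma_0)=0$ and the splitting $v=v_1+v_2$ are both correct, and the $v_2$-contribution is indeed $\vO(\inp{x}^{-1})$ as you say. The problem is exactly where you locate it: the $v_1$-part of your Helffer--Sj\"ostrand expansion is an order-$0$ operator of small \emph{norm} $O(\sqrt\epsilon)$, and a small norm bound --- even uniformly in every $\vL(L^2_s)$ --- carries no decay in $r$. There is no ``weight-transfer'' through $\chi_+(B/\sigma_0)f_1(H)$: that operator is itself of order $0$, and the factorisation $B=r^{-1/2}Ar^{-1/2}$ does not change this. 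The Mourre estimate \eqref{eq:mourreFull} concerns the positivity of $\i[H,A]$ and is irrelevant to comparing $M_{a_0}$ with $B$; the paper's proof does not use it at all. So the $v_1$-term remains order $0$ however small $\epsilon$ is taken, and your argument does not close.

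The paper avoids the resolvent expansion altogether and runs a quadratic-form ``squeeze'' instead. One inserts the weight directly by setting $S=f_2(H)\chi_-(2M_{a_0}/\sigma_0)\chi_+(B/\sigma_0)\inp{x}^{1/2}f_1(H)$ for some $f_2\succ f_1$, and bounds $\norm{S\brp}^2$ through the chain of expectation values $\inp{\sigma_0 I-M_{a_0}}_{S\brp}\to\inp{(\sigma_0+C\sqrt\epsilon)I-B}_{S\brp}\to\inp{(C\sqrt\epsilon-\sigma_0/3)I}_{S\brp}$, each step holding modulo an additive $C\norm{\brp}^2$: the first uses $M_{a_0}\le 5\sigma_0/6$ on $\supp\chi_-(2\cdot/\sigma_0)$, the second uses precisely your pointwise bound on $v$ from \eqref{eq:compa00} and \eqref{eq:compa}, and the third uses $B\ge 4\sigma_0/3$ on $\supp\chi_+(\cdot/\sigma_0)$. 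All commutation errors incurred along the way are $\vO(r^{-1})$ and hence, sandwiched between the two $\inp{x}^{1/2}$-factors implicit in $S^*(\cdot)S$, produce only bounded contributions. For $3C\sqrt\epsilon\le\sigma_0$ the chain yields $\norm{S\brp}\le C\norm{\brp}$, and the same estimate with conjugated weights gives $S=\vO(r^0)$. The crucial difference from your approach is that the $\sqrt\epsilon$-smallness is used \emph{additively} against the fixed spectral gap $\sigma_0/3$, not multiplicatively, so no iteration or weight-extraction mechanism is needed.
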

\begin{proof}  Take any 
$f_2\succ f_1$. 
Introducing 
  \begin{equation*}
    S=f_2(H)\chi_-(2M_{a_0}/\sigma_0)
  \chi_+(B/\sigma_0)\inp{x}^{1/2}f_1(H),
  \end{equation*} it suffices  to be show that
  $S=\vO(\inp{x}^{0})$. Below we tacitly use various commutation based on \eqref{82a0}.
  A  relevant and  rather detailed 
account  is given in 
  \cite[Section 6]{Sk1} (note in particular that it is not completely
  obvious how to commute the factors of $\chi_-(2M_{a_0}\sigma_0)$ and
  $\chi_+(B/\sigma_0)$ using \eqref{82a0}; the ample energy-localization is
  conveniently used, see \cite[Remark 6.19]{Sk1}).

  We estimate   by   repeated
commutation:  For any $\brp\in
  L^2_\infty\subseteq \vH$
  \begin{align} \label{eq:commsmall0}
  \begin{split}
\tfrac{\sigma_0}6 \norm {{S\brp}}^2
&\leq \inp{\sigma_0 I-M_{a_0}
  }_{S\brp}+C_1\norm{\brp}^2 \\
&\leq \inp{(\sigma_0+C\sqrt{\epsilon}) I-B
  }_{{S\brp}}+C_2\norm{\brp}^2 \quad \quad \quad (\text{by
  }\eqref{eq:compa00} \mand \eqref{eq:compa})\\&
\leq \inp{(\sigma_0+C\sqrt{\epsilon}
  -\tfrac 43\sigma_0) I}_{{S\brp}}+C_3\norm{\brp}^2 
\\&\leq C_3\norm{\brp}^2\quad\quad \quad\quad\quad \quad\quad\quad \quad\quad\quad\quad \quad (\text{for }3C\sqrt{\epsilon}\leq \sigma_0). 
 \end{split} 
\end{align}

By repeating  the estimation
\eqref{eq:commsmall0} with $S\brp$ replaced by 
$\inp{x}^{s}S\inp{x}^{-s}\brp$, $s\in \R\setminus\set{0}$,  we
conclude  the
 estimate \eqref{eq:defOrder} with $t=0$ as wanted. Note that the
 above 
  constant $C$ works in this case also.
\end{proof}

Let us  for given `orbits'
 $\phi_1(t),\phi_2(t)\in \vH$ write 
$\phi_1(t) \simeq \phi_2(t)$   to signify that  $\norm{\phi_1(t) - \phi_2(t)}\to 0$
for $t\to +\infty $. The assertions \eqref{eq:MicroB0} and Lemma \ref{lem:compBogB}  should
allow us 
initially to localize (for any fixed small 
  $\epsilon>0$) as 
\begin{equation*}
  \psi(t)\simeq \chi_+(2M_{a_0}/\sigma_0)\chi_+(
  B/\sigma_0)\psi(t);\quad \psi(t)=\e^{-\i t H}f(H)\psi.
\end{equation*} We need the following version of this localization. 
\begin{lemma}\label{lemma:phase-space-part1} Let $g_+\in C^\infty (\R)$
  be given by $g_+(s)=\chi_+(2s/\sigma_0)/s$, $s\in \R$,  and let
  \begin{equation}\label{eq:gDef}
   S_a=g_+(M_{a_0})M_a;\quad a\in \vA'.
  \end{equation}
Then for any    support
  function
$f_1\succ f$, also supported in $U_\lambda$, 
  \begin{align*}
\psi(t)
  &\simeq f_1(H)\chi_+(2M_{a_0}/\sigma_0)\psi(t)\\ &\quad\quad\quad\quad=\sum_{a\in \vA'}\,f_1(H)S_a\psi(t)
\simeq \sum_{a\in \vA'}\,f_1(H_a)S_a\psi(t).
\end{align*}
\end{lemma}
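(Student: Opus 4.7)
The statement has three pieces: the first $\simeq$, the middle equality, and the second $\simeq$. I would first dispatch the middle equality, which is purely algebraic. By the definition of $g_+$, $s\,g_+(s) = \chi_+(2s/\sigma_0)$ identically, and by Lemma~\ref{lemma:m1}\ref{item:9b} one has the operator identity $M_{a_0} = \sum_{a\in\vA'} M_a$. Functional calculus for $M_{a_0}$ then yields
\[
\chi_+(2M_{a_0}/\sigma_0) = g_+(M_{a_0})\, M_{a_0} = \sum_{a\in\vA'} g_+(M_{a_0}) M_a = \sum_{a\in\vA'} S_a,
\]
and left-multiplying by $f_1(H)$ gives the middle equality as a bounded-operator identity. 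Boundedness of each $f_1(H)S_a$ follows because $M_a f_1(H)$ is bounded ($M_a$ being first-order and $f_1(H)$ mapping into $\vD(H)$), while $g_+(M_{a_0})$ is bounded.

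For the first $\simeq$, using $f_1(H)\psi(t) = \psi(t)$ (since $f_1\succ f$), the elementary pointwise bound $(1-\chi_+(s))^2 \leq 1 - \chi_+(s)^2 = \chi_-(s)^2$, and $\norm{f_1(H)}\leq 1$, I reduce matters to proving
\[
\norm{\chi_-(2M_{a_0}/\sigma_0)\psi(t)} \to 0.
\]
By \eqref{eq:MicroB0} and boundedness of $\chi_-$, this is equivalent to showing $\norm{\chi_-(2M_{a_0}/\sigma_0)\chi_+(B/\sigma_0) f_1(H)\psi(t)} \to 0$, and Lemma~\ref{lem:compBogB} identifies the operator here as $\vO(r^{-1/2})$. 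The key observation is that $f(H)\psi \in \vH_{\mathrm{ac}}(H)$: the Mourre estimate \eqref{eq:mourreFull}, the LAP \eqref{eq:BB^*a}, and $\lambda\notin \vT_\p(H)$ together ensure $U_\lambda$ contains neither point nor singular continuous spectrum. Since $\inp{x}^{-1/2}f_1(H)$ is compact (standard Rellich-type argument), so is $\vO(r^{-1/2})f_1(H)$, and the RAGE theorem delivers the desired pointwise decay.

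For the second $\simeq$ I analyze each summand $[f_1(H) - f_1(H_a)]S_a\psi(t)$. The Helffer--Sj\"ostrand formula \eqref{82a0} combined with the resolvent identity gives
\[
f_1(H) - f_1(H_a) = -\int (\bar\partial \tilde f_1)(z)\,(H-z)^{-1}(V-V^a)(H_a-z)^{-1}\,\d\mu(z),
\]
and by Condition~\ref{cond:smooth2wea3n12}\ref{item:shortr} every summand $V_b$ in $V - V^a = \sum_{b\in\vA,\,b\not\leq a}V_b$ makes $V_b(-\Delta_{x^b}+1)^{-1}$ compact, so $f_1(H) - f_1(H_a)$ is compact. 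Composing with the bounded $S_a f_1(H)$ makes the full operator $[f_1(H)-f_1(H_a)]\, S_a\, f_1(H)$ compact, and RAGE (applied to $\psi(t) = f_1(H)\psi(t)$ with $f(H)\psi \in \vH_{\mathrm{ac}}(H)$) yields the required pointwise decay. Summation over $a\in \vA'$ finishes the second $\simeq$.

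\textbf{Main obstacle.} The delicate step is the first $\simeq$: the exponent $1/2$ in the bound $\vO(r^{-1/2})$ of Lemma~\ref{lem:compBogB} sits exactly at the boundary of Kato smoothness (\eqref{eq:smoothBnd1} requires $\delta>1/2$), so $L^2_t$-integrability of $\norm{\inp{x}^{-1/2}\psi(t)}^2$ is unavailable. The proper substitute is RAGE decay on the absolutely continuous subspace, which hinges on the (available but not entirely obvious) fact that $f(H)\psi\in \vH_{\mathrm{ac}}(H)$, a consequence of the Mourre machinery together with the choice of $U_\lambda$ disjoint from $\vT_\p(H)$.
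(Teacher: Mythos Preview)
Your middle equality and first $\simeq$ are both correct. For the first $\simeq$ you take a genuinely different route from the paper: the paper first proves that $L:=\lim_{t\to\infty}\norm{\chi_-(2M_{a_0}/\sigma_0)\psi(t)}^2$ exists by differentiating in $t$ and controlling $\i[H,M_{a_0}]$ through the smoothness bounds \eqref{eq:smoothBnd1} and \eqref{eq:smoothBnd2}, and only then identifies $L=0$ along a subsequence via Lemma~\ref{lem:compBogB} and the estimate $\int_0^\infty\norm{\inp{x}^{-1/2}\psi(t)}^4\,\d t<\infty$. Your compactness-plus-RAGE argument is shorter and does not need \eqref{eq:smoothBnd2} here; the price is the spectral input $f(H)\psi\in\vH_{\mathrm{ac}}(H)$, which the paper deliberately avoids in order to keep the proof purely commutator-based (with stationary applications in view).

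Your argument for the second $\simeq$, however, contains a real gap: the operator $f_1(H)-f_1(H_a)$ is \emph{not} compact on $\vH=L^2(\bX)$ in a genuine $N$-body situation. Condition~\ref{cond:smooth2wea3n12}\ref{item:shortr} gives compactness of $V_b(-\Delta_{x^b}+1)^{-1}$ on $L^2(\bX^b)$, but on $L^2(\bX)=L^2(\bX^b)\otimes L^2(\bX_b)$ this operator is tensored with the identity on $L^2(\bX_b)$ and is therefore non-compact whenever $\bX_b\neq\{0\}$. Concretely: if $\min\vT(H)<0$ and $\supp f_1\subset(\min\vT(H),0)$, then $f_1(H_{a_{\min}})=f_1(-\Delta)=0$ while $f_1(H)$ is a nonzero spectral cutoff onto essential spectrum, so $f_1(H)-f_1(H_{a_{\min}})=f_1(H)$ is manifestly non-compact.

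What rescues the situation is the geometric localization built into $M_a$: by \eqref{eq:partM} one has $M_a=\xi_a^2 M_a$ with $\supp\xi_a\subseteq\bY_a(\epsilon^d/2)$, and on that set $\abs{x^b}\geq c\abs{x}$ for every $b\not\leq a$, so that Condition~\ref{cond:smooth2wea3n12}\ref{item:shortl} furnishes genuine decay of $V_b$ there. The paper commutes the two factors of $\xi_a$ through $(H-z)^{-1}g_+(M_{a_0})$ in the Helffer--Sj\"ostrand integral (each commutator costing a power of $r^{-1}$) and obtains $(f_1(H_a)-f_1(H))S_af_1(H)=\vO(r^{-1-2\mu})$, see \eqref{eq:bndInterclust}. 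From there you can conclude either as the paper does (integrability via \eqref{eq:smoothBnd1} plus a bounded-derivative argument), or, consistently with your own approach, by observing that an $\vO(r^{-1-2\mu})$ operator composed with $f_1(H)$ is compact and invoking RAGE. Either way, the $\xi_a$-localization is the missing ingredient and cannot be bypassed.
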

\begin{proof} {\bf I}.  The middle    assertion is obvious from  Lemma
\ref{lemma:m1} \ref{item:9b}.

  	{\bf II}. For the first assertion it suffices to show that 
        \begin{equation*}
          L:=\lim_{t\to \infty}\,\norm{\chi_-(2M_{a_0}/\sigma_0)\psi(t)}^2=0.
        \end{equation*}

 First we show that the limit $L$ exists by 
  differentiating
$\norm{\chi_-(2M_{a_0}/\sigma_0)\psi(t)}^2$ with respect to
$t$ and integrating. We claim that indeed the derivative in integrable
thanks to 
\eqref{eq:smoothBnd1} and \eqref{eq:smoothBnd2} with  $\psi(t)$ replaced
with
$\psi_1(t):=\e^{-\i t H}f_1(H)\psi$. To see this we apply   the calculus result \cite[Lemma 6.12]{Sk1}
with $ B$  replaced by $ M_{a_0}$, hence letting 
$\theta(M_{a_0})=\sqrt{(-\chi^2_-)'(2M_{a_0}/\sigma_0)}$, 
$\brf_1(H)=Hf_1(H)$ and  $\bD_1 T=\i[\brf_1(H),T]$, it follows that 
\begin{align}\label{eq:deM}
\begin{split}
  f(H)^*\parb{ \bD_1 \chi^2_-(2M_{a_0}/\sigma_0)}&f(H)\\
= -\tfrac 2{\sigma_0}\theta(M_{a_0})&f(H)^*\i[H,M_{a_0}]f(H)\theta(M_{a_0})+
   \vO(r^{-2\mu-1}).
\end{split}
\end{align} 
We substitute \eqref{eq:deM}  in the inner product
$\inp{\bD_1\chi^2_-(2M_{a_0}/\sigma_0)}_{f(H)\psi_1(t)}$ and then in
turn substitute $\i[H,M_{a_0}]=\sum_{a\in \vA'}\,\i[H,M_a]$. Thanks to  
\eqref{eq:smoothBnd1} it
suffices  for each $a\in \vA'$ 
to consider  the contribution from \eqref{eq:Hes0}, cf. \cite[Lemma
6.13]{Sk1}. This  expression is treated by first pulling  
the appearing (components of) factors of
\begin{equation}\label{eq:Qdef}
  Q= Q(a,j)=\xi_jG_{b_j} \,\mand\,
Q^*
\end{equation}
to the right and  to the left in the inner product, amounting  to
compute the commutator 
 $[Qf(H),\theta(M_{a_0})]$ with $\theta(M_{a_0})$ represented by
 \eqref{82a0} using the  adjoint
form of \eqref{eq:rCalc2} to bound 
 \begin{equation*}
   [Qf(H),M_{a_0}]=[Q,M_{a_0}]f(H)+Q[f(H),M_{a_0}]=\vO(r^{-3/2})
 \end{equation*}
   and then estimate the errors  by  \eqref{eq:smoothBnd1}. Placed
next to the factors of $\psi_1(t)$ as $Qf(H)\psi_1(t)$,     the desired
integrability finally follows from \eqref{eq:smoothBnd2}. 
Hence indeed $L$ exists.

It remains to show that  $L=0$. We calculate using \eqref{eq:MicroB0} and Lemma \ref{lem:compBogB} 
\begin{align*}
  L&=\lim_{t\to
     \infty}\,\norm{\chi_-(2M_{a_0}/\sigma_0)\chi_+(B/\sigma_0)f_1(H)\psi(t)}^2\\
&= \lim_{t\to \infty}\,\norm[\big]{T\inp{x}^{-1/2}\psi(t)}^2;\quad  T=\vO(r^0).
\end{align*}
 By \cas and  the local decay estimate  \eqref{eq:smoothBnd1} 
 \begin{equation*}
   \int_0^\infty\,\norm[\big]{\inp{x}^{-1/2}\psi(t)}^4\,\d t\leq C
   \int_0^\infty\,\norm[\big]{\inp{x}^{-1}\psi(t)}^2\,\d t < \infty.
 \end{equation*} In particular it follows that $\norm{\inp{x}^{-1/2}\psi(t_n)}\to 0$
 along some sequence $t_n\to \infty$, yielding the conclusion   $L=0$, 
  and therefore also 
 the first   assertion.

	{\bf III}. We prove the  third    assertion. By using
        \eqref{82a0} and   \eqref{eq:partM}, it follows
        that  
\begin{align}
  \begin{split}
 \parb{f_1(H_a)-f_1(H)}S_af_1(H)&\\=\sum_{b\nleq a}\int _{\C}(H_a
  -z)^{-1}&V_b\Big [\big [(H -z)^{-1}g_+(M_{a_0}),\xi_a\big ],\xi_a\Big ] M_a\,\mathrm
  d\mu_f(z)\\&+\vO(r^{-1-2\mu})\quad =\quad\vO(r^{-1-2\mu}).   
  \end{split}\label{eq:bndInterclust}
\end{align}   Note that 
 the contribution from the integral is   of order
$-2$, cf. \eqref{eq:rCalc1}.  

In combination with \eqref{eq:smoothBnd1} we then
obtain  that
\begin{equation*}
  \int_0^\infty\,\norm {\phi(t)}^2\,\d t<\infty;\quad \phi(t):= \parb{f_1(H_a)-f_1(H)}S_a\psi(t).
\end{equation*} Hence   $\norm {\phi(t)}^2$  is integrable, and since
it has a bounded
time-derivative, we conclude that  $\norm{\phi(t)}\to 0$ for $t\to
\infty$. We are done.
\end{proof}

\subsection{Proof of Lemma \ref{lemma:asympt-compl}}\label{subsec:Proof of Lemma ref{lemma:asympt-compl}}
We claim that  the observables $S_a$   given by \eqref{eq:gDef} work
in Lemma \ref{lemma:asympt-compl}.  Thanks to  Lemma
\ref{lemma:phase-space-part1},  \eqref{eq:part} holds, and it  remains to show \eqref{eq:part2}.

 Using  any  $f_2\succ f_1 $,   we first record that 
\begin{equation}\label{eq:bndInterclust2}
  T_1:=\i \parb{\breve f_2(H_a)-\breve f_2(H)}S_af(H)=\vO(r^{-1-2\mu}),\quad \breve f_2(h):=hf_2(h),
\end{equation} 
        cf. \eqref{eq:bndInterclust}.
 Letting   $\phi_1(t)=\e^{-\i t
  H_a}f_1(H_a)\phi$, $\phi\in \vH$,  $\psi_1(t)=\e^{-\i t
  H}f_1(H)\psi$ and $\bD_2 S=\i[\brf_2(H),S]$, we   compute the time-derivative of
$\inp{\phi_1(t),S_a\psi(t)}$ as $\inp{\phi_1(t),T\psi_1(t)}$ with
\begin{align*}
   T&=\i\parb{\breve
                                    f_2(H_a)S_a-S_a\breve
                                    f_2(H)}f(H)=T_1+T_2+T_3;\\&T_2:=f_2(H_a)\parb{\bD_2 g_+(M_{a_0})}M_af(H),\quad T_3:=f_2(H_a)g_+(M_{a_0})\parb{\bD_2 M_{a_0}}f(H) .
\end{align*}

We substitute  this decomposition in the inner product, split it into three terms and
integrate each of them.  For  the contribution from  $T_1$,     in
turn we substitute \eqref{eq:bndInterclust2} allowing us then to   integrate
thanks to  \eqref{eq:smoothBnd1}. Note that we can use \eqref{eq:smoothBnd1}
for the  state  $\psi_1(t)$
as well as for   $\phi_1(t)$ 
  (recalling here and below that \eqref{eq:smoothBnd1} and
  \eqref{eq:smoothBnd2} are  also available for  $H_a$), yielding in
  particular the  bound
  $C\norm{\phi}$. This is  standard application of 
  Kato's  smooth operator theory, \cite{Ka}, and the Cauchy criterion
  is verified  by  bounding the time-integral at infinity
  only  (yielding in this case $C$ as small as wanted). 

For $T_2$ and $T_3$ we  compute using
  \cite[Lemmas 6.9 and  6.11]{Sk1} (applied to   $M_{a_0}$
  rather than to  $ B$) and \eqref{eq:rCalc2}
  \begin{align*}
    T_2&=4\,
    g_+'(M_{a_0})f_2(H_a)\sum_{b\in
         \vA'}\,p\cdot\parb{\chi^2_+(\abs{x})\nabla^2m_b(x)} p
         f(H)M_a+\vO(r^{-2\mu-1}),\\
T_3&=4\,
    g_+(M_{a_0})f_2(H_a)\,p\cdot\parb{\chi^2_+(\abs{x})\nabla^2m_a(x)} p  f(H)+\vO(r^{-2\mu-1}).
  \end{align*}   

The treatment  of $T_3$ is very similar to arguments
  used  in Step II of
the proof of Lemma \ref{lemma:phase-space-part1}. In fact we use 
again $Q= Q(a,j)$
(given as in \eqref{eq:smoothBnd2} and  \eqref{eq:Qdef}). We substitute
\eqref{eq:Hes0}  and  pull the appearing (components of) factors of
$Q^*$ to the left in the inner product. Thanks to
\eqref{eq:smoothBnd1} and the bound $[Qf_2(H_a),g_+(M_{a_0})]=\vO(r^{-3/2})$ 
 it then suffices to consider the contribution to the time-integral with
 explicitly appearing  factors of
 $Qf_2(H_a)\phi_1(t)$ and $Qf(H)\psi_1(t)$, yielding with
 \eqref{eq:smoothBnd2}  the  bound  
$C\norm{\phi}$ (with $C$ small   for the contribution at
infinity).

The treatment  of $T_2$ is similar, although we need
 an additional  commutation to the right.  More precisely (using Subsection \ref{subsec: Geometric
   considerations for a0=a{max}} with
 $a$ replaced by $b$) we note     that thanks to \eqref{eq:rCalc3} applied to
 $Q=Q(b,j)$ 
\begin{equation*}
  [Q,f(H)M_a]=[Q,f(H)]M_a +f(H)[Q,M_a]=\vO(r^{-3/2}). 
 \end{equation*} 
 Since $f(H)M_a$ is bounded  we can then  proceed as for  $T_3$ using 
 \eqref{eq:smoothBnd2} for $Q\psi_1(t)$
  and $Q\phi_1(t)=Qf_2(H_a)\phi_1(t)$.
\qed

\appendix
\section{Proof of \eqref{eq:MicroB0}}\label{sec:Strong bounds}

We shall in this appendix prove \eqref{eq:MicroB0} using a
time-dependent version of \cite[Lemma 5.3]{AIIS}. Recall that
$\lambda\in \R\setminus\vT_{\p}(H)$ is fixed. Our proof  based on the
Mourre estimate \eqref{eq:mourreFull} with $c=\sigma^2$, recalling \eqref{eq:optimal} and that we 
 have taken
$\sigma=\tfrac{10}{11}\sigma_0$ with $\sigma_0=\sqrt{d(\lambda,H)}$.
We will use  it on the form of \cite[Corollary
6.8]{Sk1} (or more precisely  \cite[Corollary 2.11]{AIIS}), i.e. 
\begin{equation}\label{eq:mourreFull9}
  \begin{split}
  \forall \text{ real } f\in C_\c^\infty(U_\lambda):\quad{f}(H)
  &\i[H,B]{f}(H)\\&\geq
 \,{f}(H)r^{-1/2}\parb{4\sigma^2-B^2}r^{-1/2}{f}(H)- Cr^{-2}.
  \end{split}
\end{equation} 
Below we will also use the parameter $\mu\in (0,1/2)$ from Condition
\ref{cond:smooth2wea3n12}.
\begin{lemma} \label{lemma:propa}  
Let  $\lambda\in \R\setminus\vT_{\p}(H)$, $\sigma'\in (0,\sigma)$  and  $\mu'\in(0,\mu)$.
  Consider 
   any real $f\in
 C_\c^\infty(U_\lambda)$ (with $U_\lambda$ given as in \eqref{eq:mourreFull9}) and  any   
$f_1\succ f$, also supported in $U_\lambda$,  
    \begin{align*}
      \Psi_\epsilon&=- f(H)
      r^{\mu'}F_\epsilon( B)r^{\mu'}f(H);\\&\quad
F_\epsilon(b)=(2\sigma'+3\epsilon-b)^{2\mu'}\chi^2_\epsilon(b-2\sigma'),\quad
                                             \chi_\epsilon(s)=\chi_-(s/\epsilon),\quad \epsilon>0
. 
\end{align*}
 There  exist $\kappa>0$ 
such that for all small $\epsilon>0$, as a quadratic form  on $L^2_{\mu'}$,
\begin{align}\label{eq:171118v}
  \begin{split}
   \i[H,\Psi_\epsilon]
\ge 
 \kappa &f(H)\chi_\epsilon(B-2\sigma')r^{-1+2\mu'}\chi_\epsilon(B-2\sigma')f(H)\\
&\quad\quad\quad\quad\quad\quad\quad\quad-C_\epsilon f_1(H)r^{-1-2(\mu-\mu')}f_1(H);\quad
C_\epsilon>0.
 \end{split}
\end{align} 
\end{lemma}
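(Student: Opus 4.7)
\medskip\noindent\textbf{Proof proposal for Lemma \ref{lemma:propa}.}

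The plan is to compute the commutator $\i[H,\Psi_\epsilon]=-f(H)\i[H,r^{\mu'}F_\epsilon(B)r^{\mu'}]f(H)$ by a Leibniz/Helffer--Sj{\"o}strand expansion into a ``transport'' part and a ``Mourre'' part, apply \eqref{eq:mourreFull9} to bound $\i[H,B]$ from below, and then combine the two leading contributions via a purely algebraic identity that produces the positive square $\chi_\epsilon(B-2\sigma')r^{-1+2\mu'}\chi_\epsilon(B-2\sigma')$. Throughout I use the calculus results of \cite[Subsections 6.2--6.4]{Sk1} to make the symbolic identities rigorous modulo remainders of order $r^{-1-2(\mu-\mu')}$, where the short-range decay $V_a=\vO(r^{-1-2\mu})$ of Condition \ref{cond:smooth2wea3n12} is what yields that particular error exponent.

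First, a direct computation of $\i[H,r^{\mu'}]$ (using $\i[H,r]=B$ and symmetrization) gives $\i[H,r^{\mu'}]=\mu'\,r^{\mu'-1}B$ modulo operators of order $r^{\mu'-2}$. Writing $-F_\epsilon'(b)=\theta_\epsilon^2(b)\ge 0$, an application of \eqref{82a0} to $F_\epsilon(B)$ (with energy localization supplied by $f(H)$) yields $\i[H,F_\epsilon(B)]=-\theta_\epsilon(B)\i[H,B]\theta_\epsilon(B)$ modulo remainders acceptable in $L^2_{\mu'}$. Expanding the triple product by Leibniz and sandwiching with $f(H)$, the leading-symbol identity for $-\i[H,r^{\mu'}F_\epsilon(B)r^{\mu'}]$ is
\begin{equation*}
-2\mu' b\,F_\epsilon(b)\,r^{2\mu'-1}+\theta_\epsilon^2(b)\,\{h,b\}\,r^{2\mu'}
\end{equation*}
(modulo $r^{-1-2(\mu-\mu')}$). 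Using \eqref{eq:mourreFull9} to replace $\i[H,B]$ by $r^{-1/2}(4\sigma^2-B^2)r^{-1/2}$ (i.e.\ $\{h,b\}\ge(4\sigma^2-b^2)/r$ at the symbol level) transforms the right-hand side into $r^{2\mu'-1}$ times $-2\mu'b\,F_\epsilon(b)+(-F_\epsilon'(b))(4\sigma^2-b^2)$.

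The key algebraic step is now the identity (dropping the positive $\chi_\epsilon\chi_\epsilon'$ contribution from $-F_\epsilon'$, which only helps)
\begin{align*}
-2\mu'b(2\sigma'+3\epsilon-b)^{2\mu'}&+2\mu'(2\sigma'+3\epsilon-b)^{2\mu'-1}(4\sigma^2-b^2)\\
&=2\mu'(2\sigma'+3\epsilon-b)^{2\mu'-1}\bigl[4\sigma^2-b(2\sigma'+3\epsilon)\bigr].
\end{align*}
Substituting $\beta=2\sigma'+3\epsilon-b\ge 4\epsilon/3$ on $\supp\chi_\epsilon(\cdot-2\sigma')$, the bracket equals $[4\sigma^2-(2\sigma'+3\epsilon)^2]+\beta(2\sigma'+3\epsilon)$, so for $\epsilon$ small enough that $2\sigma'+3\epsilon<2\sigma$ (e.g.\ $3\epsilon\le\sigma-\sigma'$) we obtain the lower bound $c_1\beta^{2\mu'-1}+c_2\beta^{2\mu'}$ with $c_1,c_2>0$ independent of $\epsilon$. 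The function $\beta\mapsto\beta^{2\mu'-1}+\beta^{2\mu'}$ on $\beta>0$ achieves its minimum at $\beta_*=(1-2\mu')/(2\mu')>0$ (since $2\mu'<1$), giving a \emph{uniform} positive lower bound $\kappa_0>0$; this is exactly where the restriction $\mu'<1/2$ is used. Hence the leading part of $-\i[H,r^{\mu'}F_\epsilon(B)r^{\mu'}]$ dominates $2\mu'\kappa_0\,\chi_\epsilon^2(B-2\sigma')\,r^{2\mu'-1}$, which after inserting the outer $f(H)$'s yields the stated lower bound with $\kappa=2\mu'\kappa_0$.

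The main obstacle is bookkeeping the remainders: one must verify that symmetrization errors from $\i[H,r^{\mu'}]$, the $\vO(r^{-2})$ in \eqref{eq:mourreFull9}, and all potential commutator terms $\i[V,\cdot]$ (which enter through $\i[H,F_\epsilon(B)]$ and $\i[H,r^{\mu'}]$) all fit within the stated error $C_\epsilon f_1(H)r^{-1-2(\mu-\mu')}f_1(H)$. The short-range bound $V_a=\vO(r^{-1-2\mu})$ combined with two factors of $r^{\mu'}$ produces precisely the exponent $-1-2(\mu-\mu')$; the $f_1\succ f$ inflation is used to absorb the $f(H)$-to-$f_1(H)$ commutations arising from \eqref{82a0}. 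Additionally, since neither $F_\epsilon$ nor $r^{\mu'}$ is compactly supported and $B$ is unbounded, one has to interpret all identities as quadratic forms on $L^2_{\mu'}$ and rely on the Helffer--Sj{\"o}strand techniques (in the spirit of \cite[Remark 6.19]{Sk1}) to justify commuting the factors of $\chi_-(2M_{a_0}/\sigma_0)$-type analogues past $f(H)$; this is the technically demanding, but routine, part of the argument, paralleling \cite[Lemma 5.3]{AIIS}.
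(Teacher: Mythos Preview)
Your proof is correct and follows the same architecture as the paper's: a Leibniz expansion of $\i[H,\Psi_\epsilon]$ into the transport piece $-2\mu' b F_\epsilon(b)r^{2\mu'-1}$ and the Mourre piece $\theta_\epsilon^2(b)\{h,b\}r^{2\mu'}$, followed by invoking \eqref{eq:mourreFull9} and an algebraic combination. The paper arrives at exactly your expression $2\mu'(2\sigma'+3\epsilon-b)^{2\mu'-1}\chi_\epsilon^2\,[4\sigma^2-b(2\sigma'+3\epsilon)]$, written operator-theoretically as $2\mu'\,T^*\bigl(4\sigma^2-B(2\sigma'+3\epsilon)\bigr)T$ with $T=(2\sigma'+3\epsilon-B)^{\mu'-1/2}S$.

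The one genuine difference is in how the positive lower bound is extracted. The paper estimates the bracket from below by the \emph{constant} $4\sigma^2-(2\sigma'+2\epsilon)(2\sigma'+3\epsilon)$ on $\supp\chi_\epsilon$, obtaining $\kappa_1 T^*T$; since $T^*T$ still carries the factor $(2\sigma'+3\epsilon-B)^{2\mu'-1}$, which degenerates as $B\to-\infty$, a separate step \eqref{eq:lowST} is then needed, using that $f_1(H)Bf_1(H)$ is bounded to pass from $T^*T$ to $S^*S$. Your rewriting of the bracket as $c_1+c_2\beta$ with $\beta=2\sigma'+3\epsilon-b$ and the observation that $c_1\beta^{2\mu'-1}+c_2\beta^{2\mu'}$ has a strictly positive infimum over all $\beta>0$ bypasses \eqref{eq:lowST} entirely; at the operator level this reads $T^*\bigl(4\sigma^2-B(2\sigma'+3\epsilon)\bigr)T = S^*\bigl[c_1\beta^{2\mu'-1}+c_2\beta^{2\mu'}\bigr]_{\beta=2\sigma'+3\epsilon-B}S \ge \kappa_0\,S^*S$ as a spectral-calculus inequality, so no energy-boundedness of $B$ is needed. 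This is a modest but real simplification.

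One small omission: when you say ``dropping the positive $\chi_\epsilon\chi_\epsilon'$ contribution from $-F_\epsilon'$, which only helps'', note that this term is multiplied by $4\sigma^2-b^2$, so you must check $4\sigma^2-b^2>0$ on $\supp\chi_\epsilon'\subset[2\sigma'+4\epsilon/3,\,2\sigma'+5\epsilon/3]$; this holds for small $\epsilon$ since $\sigma'<\sigma$, but deserves one line.
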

\begin{proof}  
Letting  $ \brf_1(H):=Hf_1(H)$, $\bD_1 T=\i[\brf_1(H),T]$
 and  
$\theta_\epsilon:=\sqrt{-F_\epsilon'}$ we   compute  using
 \cite[Lemmas  6.3 and 6.12]{Sk1}, cf. \eqref{eq:deM},
\begin{align*}
  f(H)r^{\mu'}\parb{ \bD_1 F_\epsilon(B)}&r^{\mu'}f(H)\\
= -r^{\mu'}\theta_\epsilon(B)&f(H)\i[H,B]f(H)\theta_\epsilon(B)r^{\mu'}+
   \vO(r^{2\mu'-2\mu-1}).
\end{align*}  It is an
elementary fact   that 
\begin{align*}
  \begin{split}
   -\tfrac12(2\sigma'+3\epsilon-b)^{1-2\mu'}F_\epsilon'(b)&=\mu'\chi^2_\epsilon(b-2\sigma')-(2\sigma'+3\epsilon-b)(\chi_\epsilon\chi'_\epsilon)(b-2\sigma')\\&\geq
                                                                                                                                                          \mu'\chi^2_\epsilon(b-2\sigma')\geq0. 
  \end{split}
\end{align*}
 We also compute
   \begin{align*}
   \bD_1
   {r}^{\mu'}=\mu '(\brf_1)'(H)r^{\mu'-1}B+\vO(r^{\mu'-2}).
 \end{align*}

Introducing 
\begin{equation*}
  S=\chi_\epsilon(B-2\sigma')r^{\mu'-1/2}f(H)\mand T=(2\sigma'+3\epsilon-B)^{\mu'-1/2}S,
\end{equation*}
this leads with \eqref{eq:mourreFull9} (and commutation) to the lower bound
\begin{align}\label{eq:Tbnd}
  \begin{split} \i[H,\Psi_\epsilon]  &\geq  2\mu'T^*\parb{4\sigma^2-B^2-B(2\sigma'+3\epsilon-B)}T+\vO(r^{2\mu'-2\mu-1})\\\
&\geq  2\mu'T^*\parb{4\sigma^2-(2\sigma'+2\epsilon)(2\sigma'+3\epsilon)}T+\vO(r^{2\mu'-2\mu-1})\\
&\geq \kappa_1T^*T+\vO(r^{2\mu'-2\mu-1});\quad\quad\quad\quad
\kappa_1=4\mu'\parb{\sigma^2-\sigma'^2}
>0. 
\end{split}
\end{align}

We claim that for some $\kappa_2,C_1>0$
\begin{align}\label{eq:lowST}
   T^*T\geq \kappa_2S^*S-C_1 r^{2\mu'-3}.
\end{align}  The combination of \eqref{eq:Tbnd} and
\eqref{eq:lowST} completes (after a commutation) the proof of
\eqref{eq:171118v} (with $\kappa=\kappa_1\kappa_2$).

Showing  \eqref{eq:lowST} amounts to  removing  the  factor
$(2\sigma'+3\epsilon-B)^{\mu'-1/2}$ of  $T$. 
 We write $S=Sf_1(H)$ and note that $ [S,f_1(H)]=\vO(r^{\mu'-3/2}).$ 
 Using the notation $\|\cdot\|_s=\|\cdot\|_{L^2_s}$ this leads to
 \begin{align*}
   \norm{S\psi}\,\leq \,\norm{ f_1(H)S\psi}+C_2\norm{\psi}_{\mu'-3/2}
\,\leq\, (2\kappa_2)^{-1}\norm{T\psi}+C_2\norm{\psi}_{\mu'-3/2},
 \end{align*} and therefore indeed to \eqref{eq:lowST}.
\end{proof} 
\begin{corollary}\label{cor:proof-eqref} Under the conditions of
  Lemma \ref{lemma:propa}  let $\psi(t)=\e^{-\i t
  H}f(H)\psi$, $\psi\in L^2_{\mu'}$. Then
\begin{subequations}
  \begin{equation}\label{eq:intB1}
  \int_0^\infty\,\norm[\big]{r^{\mu'-1/2}\chi_\epsilon(B-2\sigma')\psi(t)}^2\,\d t\,< \infty,
 \end{equation}  and 
\begin{equation}\label{eq:intB2}
   \sup_{t>0}
   \norm{r^{\mu'}\chi_\epsilon(B-2\sigma')}_{\psi(t)}\,< \infty.
 \end{equation}
\end{subequations}  
\end{corollary}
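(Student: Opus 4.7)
The plan is the standard Mourre-type propagation argument: differentiate $\inp{\Psi_\epsilon}_{\psi(t)}$ via the Heisenberg identity, apply the coercive bound \eqref{eq:171118v}, integrate in~$t$, and verify that both the boundary term and the error integral are controlled uniformly in~$T$.

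First I would pick a support function $f_0\in C^\infty_\c(U_\lambda)$ with $f_1\succ f_0\succ f$ and apply Lemma~\ref{lemma:propa} with its $f$ replaced by~$f_0$. Since $\psi(t)=\e^{-\i tH}f(H)\psi$, the absorption identities $f_0(H)\psi(t)=\psi(t)=f_1(H)\psi(t)$ hold, so evaluating \eqref{eq:171118v} as a quadratic form on $\psi(t)$ yields
\begin{equation*}
\inp{\i[H,\Psi_\epsilon]}_{\psi(t)}\ge \kappa\norm{r^{-1/2+\mu'}\chi_\epsilon(B-2\sigma')\psi(t)}^2-C_\epsilon\norm{r^{-1/2-(\mu-\mu')}\psi(t)}^2,
\end{equation*}
with $\Psi_\epsilon=-f_0(H)r^{\mu'}F_\epsilon(B)r^{\mu'}f_0(H)$, and in particular $\inp{\Psi_\epsilon}_{\psi(t)}=-\norm{F_\epsilon(B)^{1/2}r^{\mu'}\psi(t)}^2\le 0$.

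Next I would invoke the Heisenberg identity $\tfrac{\d}{\d t}\inp{\Psi_\epsilon}_{\psi(t)}=\inp{\i[H,\Psi_\epsilon]}_{\psi(t)}$ and integrate from $0$ to $T$, rearranging to get
\begin{align*}
&\norm{F_\epsilon(B)^{1/2}r^{\mu'}\psi(T)}^2+\kappa\int_0^T\norm{r^{-1/2+\mu'}\chi_\epsilon(B-2\sigma')\psi(t)}^2\,\d t\\
&\quad\le\norm{F_\epsilon(B)^{1/2}r^{\mu'}f(H)\psi}^2+C_\epsilon\int_0^T\norm{r^{-1/2-(\mu-\mu')}\psi(t)}^2\,\d t.
\end{align*}
The right-hand side is bounded uniformly in $T$: $F_\epsilon(B)^{1/2}$ is a bounded function of $B$, and the commutator $[r^{\mu'},f(H)]=\vO(r^{\mu'-1})$ is $L^2$-bounded by the Helffer--Sj\"ostrand calculus \eqref{82a0}, so the boundary term is bounded by a constant times $\norm{\inp{x}^{\mu'}\psi}^2<\infty$; the error integral is controlled by the local decay estimate \eqref{eq:smoothBnd1} applied with $\delta=1/2+(\mu-\mu')>1/2$, using $\psi(t)=f_1(H)\psi(t)$, and is therefore bounded by $C\norm{\psi}^2$. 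Both left-hand summands being nonnegative, dropping one and keeping the other gives both claims: letting $T\to\infty$ on the integral delivers \eqref{eq:intB1}, whereas keeping the boundary term at arbitrary $T$ delivers \eqref{eq:intB2} (the stated expression being equivalent up to bounded commutators to $\norm{F_\epsilon(B)^{1/2}r^{\mu'}\psi(T)}^2$).

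The main technical hurdle is rigorously justifying the Heisenberg identity, since $\Psi_\epsilon$ is unbounded and one does not a priori know that $\psi(t)\in L^2_{\mu'}$ for all~$t$. The standard workaround is to insert a radial cutoff, replacing $r^{\mu'}$ by the bounded approximation $r^{\mu'}\eta(r/N)$ with $\eta\in C^\infty_\c(\R)$ a smooth cutoff $\uparrow 1$, carry out the calculation for the resulting bounded observable $\Psi_\epsilon^{(N)}$, and pass to the limit $N\to\infty$ exploiting the $N$-uniform right-hand side together with monotone/Fatou convergence on the left. Equivalently, one first establishes the bounds for $\psi\in L^2_\infty$, where all manipulations are manifest, and extends to general $\psi\in L^2_{\mu'}$ by density.
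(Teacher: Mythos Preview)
Your proof is correct and follows essentially the same approach as the paper: differentiate $\inp{-\Psi_\epsilon}_{\psi(t)}$, apply the lower bound \eqref{eq:171118v}, integrate, and control the right-hand side via the initial data in $L^2_{\mu'}$ and the local decay bound \eqref{eq:smoothBnd1}; the paper's proof is terser (it omits the intermediate $f_0$ and the regularization discussion) but the logic is identical, including the remark that \eqref{eq:intB2} follows from the boundary term ``by a commutation''.
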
 
\begin{proof} Let $\psi_1(t)=\e^{-\i t
  H}f_1(H)\psi$. Since $\inp{-\Psi_\epsilon}_{\psi(0)}< \infty$ it follows from
  \eqref {eq:smoothBnd1} and 
  \eqref{eq:171118v} (by differentiation and integration) that for any $\tau>0$
  \begin{align*}
    &\inp{-\Psi_\epsilon}_{\psi(\tau)}+\kappa \int_0^\tau\,\norm[\big]{r^{\mu'-1/2}\chi_\epsilon(B-2\sigma')\psi(t)}^2\,\d
    t\\& \quad\quad\quad\quad\leq
    \inp{-\Psi_\epsilon}_{\psi(0)}+C_\epsilon\int_0^\infty\,\norm[\big]{r^{-1/2-(\mu-\mu')}\psi_1(t)}^2\,\d
    t<\infty.
  \end{align*} Both terms to the left  are non-negative, yielding 
  in particular  \eqref{eq:intB1}. The bound \eqref{eq:intB2}  follows from
  the same estimate by a  commutation.
\end{proof}
\begin{corollary}\label{cor:proof-eqref2} The asymptotics
  \eqref{eq:MicroB0} holds.
\end{corollary}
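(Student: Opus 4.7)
The target is $\lim_{t\to+\infty}\|\chi_-(B/\sigma_0)\psi(t)\|=0$. Since $\|\chi_-(B/\sigma_0)\|\le 1$ and $L^2_{\mu'}$ is dense in $\vH$ for any $\mu'>0$, it suffices to treat $\psi\in L^2_{\mu'}$ for a fixed $\mu'\in(0,\mu)$, the general case then following by a density argument using that $e^{-\i tH}f(H)$ is a uniformly bounded family. The plan is to combine the two bounds of Corollary~\ref{cor:proof-eqref} through an interpolation of weighted $L^2$-norms and to promote the resulting $t$-integrability to a full-limit statement via a uniform-continuity argument.

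Because $10/11>5/6$, one may choose $\sigma'\in(5\sigma_0/6,\sigma)$ and any sufficiently small $\epsilon>0$ so that Lemma~\ref{lemma:propa} (hence Corollary~\ref{cor:proof-eqref}) applies. With this choice $2\sigma'>5\sigma_0/3$, so whenever $\chi_-(b/\sigma_0)\ne 0$ one has $b<5\sigma_0/3<2\sigma'$ and consequently $\chi_\epsilon(b-2\sigma')=\chi_-\bigl((b-2\sigma')/\epsilon\bigr)=1$. By the functional calculus in $B$ this yields the operator domination
\begin{equation*}
\chi_-^2(B/\sigma_0)\;\le\;\chi_\epsilon^2(B-2\sigma').
\end{equation*}

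Next, H\"older's inequality applied to the pointwise factorisation $|\phi|^2=(r^{2\mu'}|\phi|^2)^{1-2\mu'}(r^{2\mu'-1}|\phi|^2)^{2\mu'}$ yields, for $\mu'\in(0,1/2)$, the weighted interpolation $\|\phi\|^2\le\|r^{\mu'}\phi\|^{2-4\mu'}\|r^{\mu'-1/2}\phi\|^{4\mu'}$. Applying it with $\phi=\chi_\epsilon(B-2\sigma')\psi(t)$, bounding the first factor by a constant via \eqref{eq:intB2}, and raising to the power $1/(2\mu')$ we arrive at $\|\chi_\epsilon(B-2\sigma')\psi(t)\|^{1/\mu'}\le C\,\|r^{\mu'-1/2}\chi_\epsilon(B-2\sigma')\psi(t)\|^{2}$. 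Integration in $t$ and \eqref{eq:intB1}, combined with the operator inequality above, then give $\int_0^\infty h(t)\,\d t<\infty$ for $h(t):=\|\chi_-(B/\sigma_0)\psi(t)\|^{1/\mu'}$.

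Finally, the orbit $t\mapsto\chi_-(B/\sigma_0)\psi(t)\in\vH$ is Lipschitz with constant $\|Hf(H)\psi\|<\infty$ (using $\|\chi_-(B/\sigma_0)\|\le 1$), so $t\mapsto\|\chi_-(B/\sigma_0)\psi(t)\|$ is Lipschitz with values in the bounded interval $[0,\|\psi\|]$, and hence $h$ is uniformly continuous on $\R_+$. A non-negative, uniformly continuous, $L^1$-integrable function on $\R_+$ must tend to $0$ at infinity, which is equivalent to \eqref{eq:MicroB0}. The main delicate point is the parameter balance $5\sigma_0/6<\sigma'<\sigma=10\sigma_0/11$ required to make the dominating inequality and Lemma~\ref{lemma:propa} simultaneously available; this narrow but non-empty window is exactly what the concrete choice $\sigma/\sigma_0=10/11$ in the Mourre setup \eqref{eq:mourreFull} affords.
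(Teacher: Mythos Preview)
Your proof is correct and takes a genuinely different route from the paper's.

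Both arguments start the same way: reduce by density to $\psi\in L^2_{\mu'}$, then choose $\sigma'$ in the window $(5\sigma_0/6,\sigma)$ so that $\chi_\epsilon(\cdot-2\sigma')=1$ on $\supp\chi_-(\cdot/\sigma_0)$ (your operator domination is the paper's display \eqref{eq:supPRop}). From there the paths diverge. The paper first proves that the limit $L=\lim_{t\to\infty}\|\chi_-(B/\sigma_0)\psi(t)\|^2$ \emph{exists}, by differentiating in $t$ and showing the derivative is integrable via the commutator calculus of \cite[Lemmas~6.9, 6.11]{Sk1} together with \eqref{eq:intB1}. It then shows $L=0$ by the splitting $1_{\{r<t\}}+1_{\{r\ge t\}}$: the large-$r$ piece is $\vO(t^{-\mu'})$ by \eqref{eq:intB2}, and the small-$r$ piece vanishes along a subsequence extracted from the integrability \eqref{eq:intB1}. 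You instead interpolate directly between \eqref{eq:intB1} and \eqref{eq:intB2} via H\"older to obtain $\int_0^\infty\|\chi_-(B/\sigma_0)\psi(t)\|^{1/\mu'}\,\d t<\infty$, and then conclude the full limit from the Barbalat-type fact that a nonnegative, uniformly continuous, integrable function tends to zero.

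What each approach buys: your argument is more self-contained---it avoids the commutator expansion needed for the existence step and hence does not invoke \cite{Sk1} at this point---and it delivers the full limit in one stroke rather than via ``existence + subsequence''. The paper's approach, on the other hand, fits the recurring template used elsewhere in the paper (e.g.\ Step~II of Lemma~\ref{lemma:phase-space-part1}) and makes the role of the Mourre commutator more visible.
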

\begin{proof} By a density argument, it  suffices   to consider    $\psi(t)=\e^{-\i t
  H}f(H)\psi$ with   $f\in
 C_\c^\infty(U_\lambda)$ real and with   $\psi\in L^2_{\mu'}
 \,(\subseteq \vH)$, henceforth fixed.  
We will then use
\eqref{eq:intB1} and \eqref{eq:intB2}
  with   $\sigma'$ taken close to
$\sigma$, guaranteeing that
\begin{equation}\label{eq:supPRop}
  \chi_\epsilon(b-2\sigma')=1\text{ on } \supp(\chi_-(\cdot/\sigma_0)).
\end{equation}  

We need to show that
\begin{equation*}
          L:=\lim_{t\to \infty}\,\norm{\chi_-(B/\sigma_0)\psi(t)}^2=0.
        \end{equation*}
First, by differentiating
$\norm{\chi_-(B/\sigma_0)\psi(t)}^2$ with respect to
$t$ and integrating,  it follows by using 
\cite[Lemmas 6.9 and 6.11]{Sk1},  \eqref{eq:intB1} and 
\eqref{eq:supPRop}, that indeed the limit $L$ exists.

 It remains to show that  $L=0$. From \eqref{eq:intB2} it follows that
  \begin{equation}\label{eq:rsmall}
    \norm[\big]{ 1_{\set{r\geq t}} \chi_\epsilon(B-2\sigma')\psi(t) }=\norm[\big]{ \parb{1_{\set{r\geq t}}r^{-\mu'}} r^{\mu'}\chi_\epsilon(B-2\sigma')\psi(t)) }\leq  C t^{-\mu'}.
  \end{equation}
From \eqref{eq:intB1} it follows that
\begin{equation*}
   \int_0^\infty\,\norm[\big]{1_{\set{r< t}} r^{-1/2}\chi_\epsilon(B-2\sigma')\psi(t)}^2\,\d t< \infty,
 \end{equation*} and therefore in particular, that  along some sequence
 $t_n\to \infty$
 \begin{equation*}
   t_n\norm[\big]{1_{\set{r< t_n}}
     r^{-1/2}\chi_\epsilon(B-2\sigma')\psi(t_n)}^2\to 0.
 \end{equation*} Then obviously also 
\begin{equation*}
   \norm[\big]{1_{\set{r< t_n}}
     \chi_\epsilon(B-2\sigma')\psi(t_n)}^2 \to 0,
 \end{equation*} which combined with \eqref{eq:supPRop} and \eqref{eq:rsmall} 
 yields that $\norm{\chi_-(B/\sigma_0)\psi(t_n)}\to 0.$
Hence   $L=0$  as wanted.
\end{proof}

\end{document}